\documentclass[12pt]{article}

%\usepackage[color]{showkeys}
%\usepackage[notref,color]{showkeys}  % NICE ONE 

%\usepackage{drftcite}
%\usepackage{showkeys}
%\usepackage[notref,notcite]{showkeys}
%\usepackage[doc,hhb,msm]{optional}
%\usepackage{tikz}
%%\usetikzlibrary{matrix,arrows,decorations.pathmorphing}
%%\usepackage[doc]{optional}
%\usepackage{color}
\usepackage{float}
\usepackage{graphicx}
%\definecolor{labelkey}{rgb}{0,0.08,0.45}
%\definecolor{refkey}{rgb}{0,0.6,0.0}
%\definecolor{Brown}{rgb}{0.45,0.0,0.05}
%\definecolor{lime}{rgb}{0.00,0.8,0.0}
%%\definecolor{lblue}{rgb}{0.5,0.5,0.99}
%\definecolor{lblue}{rgb}{0.8,0.85,1.00}
\usepackage{nameref}
\usepackage[shortlabels,inline]{enumitem}
\usepackage{mathtools}
\usepackage{empheq}
\usepackage[shortlabels]{enumitem}
\setlist[enumerate]{nosep}
%\usepackage{paralist}
%#############
%\documentclass[11pt,leqno]{article}
%\usepackage[color]{showkeys}
%\usepackage{drftcite}
%\usepackage[doc,hhb,msm]{optional}
\usepackage[doc]{optional}
\usepackage{xcolor}
\usepackage[colorlinks=true,
            linkcolor=refkey,
            urlcolor=lblue,
            citecolor=red]{hyperref}
\usepackage{float}
\usepackage{soul}
\usepackage{graphicx}

%%%%%%%%%%%%%%%%%%%%%%%%%%%%%%%%%%%%%%
% Shambhavi added some packages here:

%\usepackage{todonotes}
%\usepackage{caption}
%\usepackage{subcaption}
%\usepackage{placeins} %floatbarriers
%\newcommand{\norm}[1]{\left\|#1\right\|}
%%%%%%%%%%%%%%%%%%%%%%%%%%%%%%%%%%%%%%

\definecolor{labelkey}{rgb}{0,0.08,0.45}
\definecolor{refkey}{rgb}{0,0.6,0.0}
\definecolor{Brown}{rgb}{0.45,0.0,0.05}
\definecolor{lime}{rgb}{0.00,0.8,0.0}
\definecolor{lblue}{rgb}{0.5,0.5,0.99}

 \usepackage{mathpazo}

%\usepackage{mathptmx}

%\usepackage[T1]{fontenc}
%\usepackage[sc]{mathpazo}
%\linespread{1.05}

%\usepackage[T1]{fontenc}
%\usepackage{concmath}
\colorlet{hlcyan}{cyan!30}

\colorlet{hlred}{red!40}

\usepackage{stmaryrd}
\usepackage{amssymb}
\oddsidemargin -0.1cm
\textwidth  16.5cm
\topmargin  -0.1cm
\headheight 0.0cm
\textheight 21.2cm
\parindent  4mm
\parskip    10pt % was 10pt
\tolerance  3000

\hyphenation{non-empty}

\makeatletter
\def\namedlabel#1#2{\begingroup
   \def\@currentlabel{#2}%
   \label{#1}\endgroup
}
\newcommand{\vast}{\bBigg@{4}}
\newcommand{\Vast}{\bBigg@{5}}
\makeatother

%############
\oddsidemargin -0.1cm
\textwidth  16.5cm
\topmargin  -0.1cm
\headheight 0.0cm
\textheight 21.2cm
\parindent  4mm
\parskip    10pt % was 10pt
\tolerance  3000

% {#1}}}
% \newcommand{\J}[1]{\ensuremath{{\operatorname{J}}_%
% {#1}}}

%\newcommand{Xb}[1]{\sethlcolor{lblue}\hl{#1}\sethlcolor{yellow}}

\providecommand{\siff}{\Leftrightarrow}
\newcommand{\moyo}[2]{\ensuremath{\sideset{^{#2}}{}{\operatorname{}}\!#1}}

\newcommand{\nnn}{\ensuremath{{n\in{\mathbb N}}}}

\newcommand{\thalb}{\ensuremath{\tfrac{1}{2}}}

\newcommand{\fenv}[1]%
{\ensuremath{\,\overrightarrow{\operatorname{env}}_{#1}}}
\newcommand{\benv}[1]%
{\ensuremath{\,\overleftarrow{\operatorname{env}}_{#1}}}

\newcommand{\scal}[2]{\left\langle{#1},{#2}  \right\rangle}

\newcommand{\RR}{\ensuremath{\mathbb R}}

\newcommand{\RPP}{\ensuremath{\mathbb{R}_{++}}}

\newcommand{\dom}{\ensuremath{\operatorname{dom}}}

\newcommand{\prox}{\ensuremath{\operatorname{Prox}}}
\newcommand{\ran}{\ensuremath{{\operatorname{ran}}\,}}

\newcommand{\cdom}{\ensuremath{\overline{\operatorname{dom}}\,}}
\newcommand{\intdom}{\ensuremath{{\operatorname{int\,dom}}\,}}

\newcommand{\Id}{\ensuremath{\operatorname{Id}}}

\newcommand{\pinf}{\ensuremath{+\infty}}

 % vee tiny

%
{\begin{list}{}{%
\settowidth{\labelwidth}{\textrm{#1~}}%
\setlength{\leftmargin}{\labelwidth+\labelsep}}}%requires macro calc.sty
{\end{list}}
\usepackage{amsthm}
\usepackage[capitalize,nameinlink]{cleveref}
%\crefname{lemma}{lemma}{lemmas}
\crefname{figure}{Figure}{Figures}
\crefname{equation}{}{equations}
\crefname{chapter}{Appendix}{chapters}
\crefname{item}{}{items}
\crefname{enumi}{}{}

\theoremstyle{definition}
\newtheorem{theorem}{Theorem}[section]
\newtheorem{lemma}[theorem]{Lemma}

\newtheorem{corollary}[theorem]{Corollary}

\newtheorem{proposition}[theorem]{Proposition}

\newtheorem{definition}[theorem]{Definition}

%[section]
%%\theoremstyle{plain}{\theorembodyfont{\rmfamily}
%\newtheorem{assumption}[theorem]{Assumption}
%%\theoremstyle{plain}{\theorembodyfont{\rmfamily}
%\newtheorem{condition}[theorem]{Condition}
%%\theoremstyle{plain}{\theorembodyfont{\rmfamily}
%\newtheorem{algorithm}[theorem]{Algorithm}
%%\theoremstyle{plain}{\theorembodyfont{\rmfamily}
\newtheorem{example}[theorem]{Example}

\newtheorem{fact}[theorem]{Fact}
\newtheorem{remark}[theorem]{Remark}

%\theoremstyle{remark}

%\def\endproof{\vbox{\hrule height0.6pt\hbox{\vrule height1.3ex%
%width0.6pt\hskip0.8ex\vrule width0.6pt}\hrule height0.6pt}}

%########################################
%###New commands added By Walaa ##################################

\DeclarePairedDelimiter{\parens}{\lparen}{\rparen}

\providecommand{\RR}{\mathbb{R}}

\providecommand{\ran}{\operatorname{ran}}

\providecommand{\dom}{\operatorname{dom}}

\providecommand{\gr}{\operatorname{gra}}

\providecommand{\Id}{\operatorname{{ Id}}}

\providecommand{\gr}{\operatorname{gra}}

\providecommand{\ran}{\operatorname{ran}}

\providecommand{\Id}{\operatorname{Id}}

\newcommand{\cran}{\ensuremath{\overline{\operatorname{ran}}\,}}

\providecommand{\RR}{\mathbb{R}}

%##################################End of New commands added By Walaa ##################################
%\newcommand{\boxedeqn}[1]{%
%    \[\fbox{%
%        \addtolength{\linewidth}{-2\fboxsep}%
%        \addtolength{\linewidth}{-2\fboxrule}%
%        \begin{minipage}{\linewidth}%
%        \begin{equation}#1\\[+4mm]\end{equation}%
%        \end{minipage}%
%      }\]%
%  }
\definecolor{myblue}{rgb}{.8, .8, 1}

\allowdisplaybreaks % or locally if problems {\allowdisplaybreaks
%\begin{align} ... \end{align}}

%-------------------------------------------------------------------------
\begin{document}

\title{\textsf{
On Carlier's inequality
}}

\author{
Heinz H.\ Bauschke\thanks{
Department of Mathematics, University
of British Columbia,
Kelowna, B.C.\ V1V~1V7, Canada. E-mail:
\texttt{heinz.bauschke@ubc.ca}.},~
Shambhavi Singh\thanks{
Department of Mathematics, University
of British Columbia,
Kelowna, B.C.\ V1V~1V7, Canada. E-mail:
\texttt{sambha@student.ubc.ca}.},~ 
and
Xianfu Wang\thanks{
Department of Mathematics, University
of British Columbia,
Kelowna, B.C.\ V1V~1V7, Canada. E-mail:
\texttt{shawn.wang@ubc.ca}.}
}

\date{June 29, 2022} 
\maketitle

\vskip 8mm

\begin{abstract} 
The Fenchel-Young inequality is fundamental in Convex Analysis and Optimization. It states that the difference between certain function values of two vectors and their inner product is nonnegative.
Recently, Carlier introduced a very nice sharpening of this inequality, 
providing a lower bound that depends on a positive parameter. 

In this note, we expand on Carlier's inequality in three ways. 
First, a duality statement is provided. Secondly, we discuss asymptotic behaviour as the underlying parameter approaches zero or infinity. 
Thirdly, relying on cyclic monotonicity and 
associated Fitzpatrick functions, we present a lower bound that features an infinite series of squares of norms. Several examples illustrate our results.
\end{abstract}

{\small
\noindent
{\bfseries 2020 Mathematics Subject Classification:}
{Primary 
26B25,
47H05; 
Secondary 
26D07, 
90C25. 
}

{\small
\noindent {\bfseries Keywords:}
Carlier's inequality, 
cyclic monotonicity,
Fenchel conjugate, 
Fenchel--Young inequality,
Fitzpatrick function, 
maximally monotone operator, 
proximal mapping,
resolvent. 
}
%}

\section{Introduction}

Throughout the paper, we assume that 
\begin{equation}
\text{$X$ is a real Hilbert space}
\end{equation}
with inner product $\scal{\cdot}{\cdot}$ and induced norm $\|\cdot\|$. 
We also assume throughout that 
\begin{equation}
\label{e:f}
f\colon X\to\RR \;\;\text{is convex, lower semicontinuous, and proper,}
\end{equation}
and that 
\begin{equation}
\label{e:A}
A\colon X\rightrightarrows X \;\;\text{is a maximally monotone operator on $X$.}
\end{equation}
Recall that the \emph{Fenchel conjugate} $f^*$ of $f$ is defined by 
$f^*(x^*)=\sup_{x\in X}(\scal{x}{x^*}-f(x))$. 
The classical Fenchel-Young inequality states that 
for $x$ and $x^*$ in $X$, we have 
\begin{equation}
\label{e:FY}
G(x,x^*) := G_f(x,x^*) := f(x)+f^*(x^*) - \scal{x}{x^*}  \geq 0,
\end{equation}
and we have the well known equality characterization
\begin{equation}
\label{e:FY=}
G(x,x^*)=0
\;\;\Leftrightarrow\;\;
x^*\in\partial f(x). 
\end{equation}
(We assume the reader has some basic knowledge of convex analysis and monotone
operator theory as can be found, e.g., in \cite{BC2017}, \cite{MN}, \cite{Rocky}, 
and \cite{RW}.)
In \cite{Carlier}, Carlier proved recently the following stunningly 
beautiful sharpening 
of \cref{e:FY}: 
\begin{equation}
\label{e:220515a}
f(x)+f^*(x^*) -\scal{x}{x^*}\geq \frac{\|x-\prox_{\gamma f}(x+\gamma x^*)\|^2}{\gamma}, 
\end{equation}
where $\gamma>0$ and $\prox_{\gamma f}$ is the proximal mapping of $\gamma f$. 
He also discusses applications and connections to optimal transport, 
the Brondsted--Rockafellar theorem, and tilted duality.

\emph{The aim of this paper is to expand on Carlier's work in three ways:
(1)~duality (see \cref{t:duality}), (2)~asymptotic behaviour (see \cref{t:asy} 
and \cref{c:asy}), and (3)~cyclic monotonicity (\cref{c:superC}).}

The remainder of this paper is organized as follows. 
In \cref{sec:F_A}, inequalities are provided based on 
the Fitzpatrick function. 
Useful identities involving the Minty parametrization are presented in 
\cref{sec:Minty}. 
Carlier's inequality and a new duality result are given in \cref{sec:Carlier}. 
In \cref{sec:asymp}, we discuss the behaviour of the right side 
of \cref{e:220515a} when $\gamma\to 0^+$ and $\gamma\to\pinf$. 
Various examples are presented in \cref{sec:examples} to illustrate our
results. In \cref{sec:ncyc}, we obtain sharpenings when the underlying 
operator $A$ is cyclically monotone of an order bigger than $2$. 

The notation employed in this paper is fairly standard and follows largely \cite{BC2017}.

\section{The Fitzpatrick function} 
\label{sec:F_A}

In this section, we start the approach to Carlier's result.  
Several of the proofs are implicit in Carlier's work; however, we
include for completeness and the reader's convenience. 
Recall that the \emph{Fitzpatrick function} for the operator $A$ 
(see \cref{e:A}) at $(x,x^*)\in X\times X$ is given by 
\begin{subequations}
\label{F_A}
\begin{align}
F_A(x,x^*) &:= 
\scal{x}{x^*} - \inf_{(a,a^*)\in\gr A}\scal{x-a}{x^*-a^*}\\
&= \sup_{(a,a^*)\in \gr A}
\big(\scal{x}{a^*}+\scal{a}{x^*}-\scal{a}{a^*}\big);
\end{align}
\end{subequations}
see \cite{SF} for the original paper and also 
\cite{Simons} for various extensions, applications, and further references. 
It is known (see \cite{SF}) that 
\begin{equation}
\label{e:220505a}
F_A(x,x^*)\geq \scal{x}{x^*},
\quad
\text{with equality if and only if\;\;}
x^*\in Ax.
\end{equation}

The next result will be useful later. 
\begin{proposition}
\label{p:pre}
Given $x,y,x^*,y^*$ in $X$, we have 
\begin{equation}
F_A(x,y^*)+F_A(y,x^*)-\scal{x}{x^*}-\scal{y}{y^*} 
\geq \scal{y-x}{x^*-y^*},
\end{equation}
and equality holds if and only if $y^*\in Ax$ and $x^*\in Ay$. 
\end{proposition}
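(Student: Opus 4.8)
The plan is to reduce the claimed inequality to two applications of the basic Fitzpatrick estimate \cref{e:220505a}, after a purely algebraic rearrangement of the bilinear terms. First, I would rewrite the right-hand side: by bilinearity of the inner product, $\scal{y-x}{x^*-y^*} = \scal{y}{x^*} - \scal{y}{y^*} - \scal{x}{x^*} + \scal{x}{y^*}$, and hence $\scal{x}{x^*} + \scal{y}{y^*} + \scal{y-x}{x^*-y^*} = \scal{x}{y^*} + \scal{y}{x^*}$. So the inequality to be proved is equivalent to the symmetric-looking estimate $F_A(x,y^*) + F_A(y,x^*) \geq \scal{x}{y^*} + \scal{y}{x^*}$.

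Next, I would apply \cref{e:220505a} with the pair $(x,y^*)$ to obtain $F_A(x,y^*) \geq \scal{x}{y^*}$, and with the pair $(y,x^*)$ to obtain $F_A(y,x^*) \geq \scal{y}{x^*}$; adding these two inequalities delivers the bound, hence the proposition's inequality. For the equality characterization, I would write the gap between the two sides as $\big(F_A(x,y^*) - \scal{x}{y^*}\big) + \big(F_A(y,x^*) - \scal{y}{x^*}\big)$. Each summand is nonnegative by \cref{e:220505a}, so the gap vanishes if and only if both summands do, which by the equality clause of \cref{e:220505a} happens exactly when $y^* \in Ax$ and $x^* \in Ay$.

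The only thing to watch is the bookkeeping in the first step — recognizing the cancellation of the four bilinear terms that turns the right-hand side into $\scal{x}{y^*} + \scal{y}{x^*}$. After that, no monotonicity of $A$ is invoked beyond what is already packaged into \cref{e:220505a}, and the supremum representation \cref{F_A} of the Fitzpatrick function plays no further role. I do not expect any genuine obstacle here; the content lies entirely in the clean reformulation, and the rest is a one-line addition of two known inequalities.
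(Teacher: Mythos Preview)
Your proof is correct and follows essentially the same approach as the paper's: both apply \cref{e:220505a} to the pairs $(x,y^*)$ and $(y,x^*)$ and then use the bilinear identity $\scal{x}{y^*}+\scal{y}{x^*}-\scal{x}{x^*}-\scal{y}{y^*}=\scal{y-x}{x^*-y^*}$, with the equality case read off from the equality clause of \cref{e:220505a}. The only cosmetic difference is the order of operations---you rearrange first and then add, while the paper adds first and then rearranges.
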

\begin{proof}
By \cref{e:220505a}, we have 
\begin{equation}
\label{e:220505b}
F_A(x,y^*) \geq \scal{x}{y^*}
\;\;\text{and}\;\;
F_A(y,x^*) \geq \scal{y}{x^*};
\end{equation}
moreover, equality holds for both inequalities if and only if 
$y^*\in Ax$ and $x^*\in Ay$. 
Adding the two inequalities in \cref{e:220505b}, followed by 
subtracting $\scal{x}{x^*}+\scal{y}{y^*}$ from both sides, gives
\begin{subequations}
\begin{align}
F_A(x,y^*)+F_A(y,x^*)-\scal{x}{x^*}-\scal{y}{y^*}
&\geq \scal{x}{y^*} + \scal{y}{x^*} - \scal{x}{x^*}-\scal{y}{y^*}\\
&= \scal{y-x}{x^*-y^*},
\end{align}
\end{subequations}
as claimed. 
\end{proof}

\begin{fact} {\bf (Fitzpatrick)}
\label{f:Fsub}
We have 
$f\oplus f^* \geq F_{\partial f}$. 
\end{fact}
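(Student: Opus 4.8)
The plan is to unwind the definition of $F_{\partial f}$ using the second (supremum) formula in \cref{F_A} with $A=\partial f$, and then to exploit the fact that on the graph of $\partial f$ the Fenchel--Young inequality \cref{e:FY} holds with equality. Concretely, for $(a,a^*)\in\gr\partial f$ we have, by \cref{e:FY=}, that $\scal{a}{a^*}=f(a)+f^*(a^*)$, so the generic term inside the supremum becomes
\begin{equation}
\scal{x}{a^*}+\scal{a}{x^*}-\scal{a}{a^*}
=\big(\scal{x}{a^*}-f^*(a^*)\big)+\big(\scal{a}{x^*}-f(a)\big).
\end{equation}

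Next I would bound the two grouped terms separately. Since $f$ is convex, lower semicontinuous, and proper (see \cref{e:f}), the Fenchel--Moreau theorem gives $f^{**}=f$, hence $\scal{x}{a^*}-f^*(a^*)\le \sup_{u^*\in X}\big(\scal{x}{u^*}-f^*(u^*)\big)=f^{**}(x)=f(x)$; likewise $\scal{a}{x^*}-f(a)\le\sup_{u\in X}\big(\scal{u}{x^*}-f(u)\big)=f^*(x^*)$. Adding these yields that every term in the defining supremum of $F_{\partial f}(x,x^*)$ is at most $f(x)+f^*(x^*)$, so taking the supremum over $(a,a^*)\in\gr\partial f$ gives $F_{\partial f}(x,x^*)\le f(x)+f^*(x^*)=(f\oplus f^*)(x,x^*)$, which is the claim. (If $\gr\partial f=\emptyset$ the supremum is $-\infty$ and the inequality is trivial, but in fact $\gr\partial f\neq\emptyset$ under \cref{e:f}.)

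There is no serious obstacle here: the only point requiring care is the legitimate replacement of $\scal{a}{a^*}$ by $f(a)+f^*(a^*)$, which is exactly the equality case \cref{e:FY=} and is valid precisely because we restrict to pairs in $\gr\partial f$. The remaining estimates are immediate from the definition of the conjugate and the biconjugate identity $f^{**}=f$.
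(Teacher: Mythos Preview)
Your proof is correct and follows essentially the same approach as the paper: both pick $(a,a^*)\in\gr\partial f$, use the equality case \cref{e:FY=} to replace $\scal{a}{a^*}$ by $f(a)+f^*(a^*)$, bound the two resulting terms via the definitions of $f^*$ and $f^{**}=f$, and then take the supremum. Your version is slightly more explicit (invoking Fenchel--Moreau and handling the vacuous case $\gr\partial f=\emptyset$), but there is no substantive difference.
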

\begin{proof}
This is contained in \cite{SF}; see also the discussion in 
\cite[Section~2]{BMS}. For completeness, we include the short proof here. 
Let $(a,a^*)\in\gr \partial f$. 
Then 
\begin{subequations}
\begin{align}
\scal{a}{x^*}+\scal{x}{a^*}-\scal{a}{a^*}
&= \big(\scal{a}{x^*}-f(a)\big) +\big(\scal{x}{a^*}-f^*(a^*)\big)\\
&\leq f^*(x^*) + f(x). 
\end{align}
\end{subequations}
%We have equality here iff $x^*\in \partial f(a)$ and $a^*\in\partial f(x)$. 
The result follows by taking the supremum over $(a,a^*)\in\gr \partial f$. 
\end{proof}

\begin{corollary} {\bf (Carlier)}
\label{p:1}
Given $x,y,x^*,y^*$ in $X$, we have 
\begin{equation}
\label{C(1.1)}
G_f(x,x^*) + G_f(y,y^*) \geq \scal{y-x}{x^*-y^*}
\end{equation}
and 
\begin{equation}
\label{C(1.1)=}
G(x,x^*) + G(y,y^*) = \scal{y-x}{x^*-y^*}
\;\;\Leftrightarrow\;\;
\big[y^*\in\partial f(x) \text{\;and\;} x^*\in\partial f(y) \big]. 
\end{equation}
\end{corollary}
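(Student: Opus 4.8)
The plan is to derive both \cref{C(1.1)} and its equality case \cref{C(1.1)=} from the two tools just assembled, \cref{f:Fsub} and \cref{p:pre}, applied with $A=\partial f$. Since $\partial f$ is maximally monotone (Rockafellar), \cref{p:pre} is legitimately available for this choice of $A$. The one idea that makes everything work is to regroup the Fenchel--Young terms so that the conjugate arguments are \emph{crossed}: namely,
\[
G_f(x,x^*)+G_f(y,y^*)=\big(f(x)+f^*(y^*)\big)+\big(f(y)+f^*(x^*)\big)-\scal{x}{x^*}-\scal{y}{y^*},
\]
and the two bracketed sums are exactly the quantities bounded below by \cref{f:Fsub} at the points $(x,y^*)$ and $(y,x^*)$.

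From there the argument is three short steps. The first step is the regrouping above. The second step is to invoke \cref{f:Fsub} at $(x,y^*)$ and at $(y,x^*)$, add the two resulting inequalities, and subtract $\scal{x}{x^*}+\scal{y}{y^*}$, which gives
\[
G_f(x,x^*)+G_f(y,y^*)\;\geq\; F_{\partial f}(x,y^*)+F_{\partial f}(y,x^*)-\scal{x}{x^*}-\scal{y}{y^*}.
\]
The third step is to apply \cref{p:pre} with $A=\partial f$, whose right-hand side is precisely $\scal{y-x}{x^*-y^*}$; this establishes \cref{C(1.1)}. For the equivalence \cref{C(1.1)=}, the forward implication is read straight off this chain: if equality holds in \cref{C(1.1)}, then in particular the \cref{p:pre} inequality used in the third step must be an equality, and its equality case for $A=\partial f$ says exactly that $y^*\in\partial f(x)$ and $x^*\in\partial f(y)$. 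For the reverse implication I would argue directly rather than retrace the chain: from $y^*\in\partial f(x)$ and $x^*\in\partial f(y)$, the equality characterization \cref{e:FY=} yields $f(x)+f^*(y^*)=\scal{x}{y^*}$ and $f(y)+f^*(x^*)=\scal{y}{x^*}$, and substituting these into the regrouped expression collapses it immediately to $\scal{y-x}{x^*-y^*}$.

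I do not expect a genuine obstacle. The only bookkeeping point worth attention is that, in the forward direction, one needs equality only in the \cref{p:pre} step (equality in \cref{f:Fsub} then holds automatically but plays no role), so that \cref{C(1.1)=} comes out clean with no spurious side conditions. It is also worth noting — though not needed — that the \cref{f:Fsub}/\cref{p:pre} sandwich can be bypassed altogether: after the crossing regrouping, $f(x)+f^*(y^*)\geq\scal{x}{y^*}$ and $f(y)+f^*(x^*)\geq\scal{y}{x^*}$ are just two instances of the plain Fenchel--Young inequality \cref{e:FY}, whose sum already gives \cref{C(1.1)}; but routing through the Fitzpatrick function is the more suggestive path in view of what follows.
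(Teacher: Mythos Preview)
Your proposal is correct and follows essentially the same approach as the paper: the crossed regrouping, the application of \cref{f:Fsub} at $(x,y^*)$ and $(y,x^*)$, then \cref{p:pre} with $A=\partial f$, with the forward equality read off \cref{p:pre} and the reverse equality argued directly via \cref{e:FY=}. Your side remark that plain Fenchel--Young already suffices after the regrouping is a nice observation, but the main line matches the paper's proof exactly.
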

\begin{proof}
(See also \cite[Section~1]{Carlier}.)
Using \cref{f:Fsub} and \cref{p:pre}, we always have 
\begin{subequations}
\label{e:ncold}
\begin{align}
G(x,x^*) + G(y,y^*)
&= \big(f(x)+f^*(x^*)-\scal{x}{x^*}\big) + \big(f(y)+f^*(y^*)-\scal{y}{y^*}\big)\\
&= \big(f(x)+f^*(y^*)\big) + \big(f(y)+f^*(x^*)\big)-\scal{x}{x^*} -
\scal{y}{y^*}\\
&\geq
F_{\partial f}(x,y^*) + F_{\partial f}(y,x^*)
-\scal{x}{x^*} - \scal{y}{y^*}\label{e:ncold1}\\
&\geq \scal{y-x}{x^*-y^*}, \label{e:ncold2}
\end{align}
\end{subequations}
which is \cref{C(1.1)}. 
We now turn to the proof of \cref{C(1.1)=}.\\
``$\Rightarrow$'': In this case, we have equality in \cref{e:ncold2}. 
In turn,
the equality characterization in \cref{p:pre} yields
$y^*\in\partial f(x)$ and $x^*\in\partial f(y)$. \\
``$\Leftarrow$'': 
In this case, 
$f(x)+f^*(y^*)=\scal{x}{y^*}$ and $f(y)+f^*(x^*)=\scal{y}{x^*}$.
It follows that 
\begin{subequations}
\begin{align}
\big(f(x)+f^*(y^*)\big) + \big(f(y)+f^*(x^*)\big)-\scal{x}{x^*} -
\scal{y}{y^*}
&= \scal{x}{y^*}+\scal{y}{x^*}-\scal{x}{x^*} - \scal{y}{y^*}\\
&= \scal{y-x}{x^*-y^*}. 
\end{align}
\end{subequations}
Hence the chain of inequalities in \cref{e:ncold} 
is actually a chain of equalities and we are done.
\end{proof}

\section{Minty parametrization}

\label{sec:Minty}

In this section, we employ the Minty parametrization and derive 
results that will be useful later. 
Recalling the standing assumption \cref{e:A}, and given 
points $x,x^*$ in $X$ and $\gamma>0$, we have the
well known equivalences (see, e.g., \cite[Chapter~23]{BC2017}): 
\begin{equation}
\label{e:basic}
x^*\in Ax
\;\;\Leftrightarrow\;\;
x+\gamma x^* \in x+\gamma Ax
\;\;\Leftrightarrow\;\;
x = J_{\gamma A}(x+\gamma x^*),
\end{equation}
where $J_{\gamma A}=(\Id+\gamma A)^{-1}$ is the resolvent of $\gamma A$. 

\begin{lemma}
\label{l:Minty}
Let $x,x^*$ be in $X$, and let $\gamma>0$. 
Set 
\begin{equation}
\label{M1}
a := J_{\gamma A}(x+\gamma x^*)
\;\;\text{and}\;\;
a^* := \frac{x+\gamma x^*-J_{\gamma A}(x+\gamma x^*)}{\gamma}.
\end{equation}
Then 
\begin{equation}
\label{M3}
x+\gamma x^* = a+\gamma a^*, 
\end{equation}
\begin{equation}
\label{M2}
(a,a^*)\in\gr A,
\end{equation}
and 
\begin{equation}
\label{e:key1}
\scal{a-x}{x^*-a^*} = \frac{\|x-J_{\gamma A}(x+\gamma x^*)\|^2}{\gamma}. 
\end{equation}
Moreover, 
\begin{equation}
\label{M6}
\|(x,x^*)-(a,a^*)\|^2 = \big(1+\tfrac{1}{\gamma^{2}}\big)\|x-J_{\gamma A}(x+\gamma x^*)\|^2 
\end{equation}
and 
\begin{equation}
\label{M8}
\big\|(x-a)-(x^*-a^*)\big\|^2 = \big( 1+ \tfrac{1}{\gamma}\big)^2\|x-J_{\gamma A}(x+\gamma x^*)\|^2. 
\end{equation}
Finally, we have the equivalences (which may or may not hold)
\begin{equation}
\label{e:key1=0}
x^*\in Ax
\;\;\Leftrightarrow\;\;
a=x
\;\;\Leftrightarrow\;\;
a^*=x^*
\;\;\Leftrightarrow\;\;
\scal{a-x}{x^*-a^*} = 0. 
\end{equation}
\end{lemma}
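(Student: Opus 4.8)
The plan is to derive every assertion from just two ingredients: the defining formula for $a^*$ in \cref{M1}, and the resolvent equivalences in \cref{e:basic}. In particular, I would first record the purely algebraic consequence of the definition of $a^*$, namely $\gamma a^* = x + \gamma x^* - a$, which will be used repeatedly below.

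First I would prove \cref{M3}: rearranging $\gamma a^* = x + \gamma x^* - a$ gives $a + \gamma a^* = x + \gamma x^*$ at once. For \cref{M2}, I would combine \cref{M3} with \cref{e:basic} applied to the pair $(a,a^*)$ in place of $(x,x^*)$: since $a + \gamma a^* = x + \gamma x^*$, we have $J_{\gamma A}(a + \gamma a^*) = J_{\gamma A}(x + \gamma x^*) = a$, and the last equivalence in \cref{e:basic} then yields $a^* \in Aa$, i.e.\ $(a,a^*) \in \gr A$.

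The workhorse for the remaining norm identities is the single relation $x^* - a^* = \gamma^{-1}(a - x)$, obtained by substituting $\gamma a^* = x + \gamma x^* - a$ into $\gamma(x^* - a^*) = \gamma x^* - \gamma a^*$. From it, \cref{e:key1} follows because $\scal{a-x}{x^*-a^*} = \gamma^{-1}\scal{a-x}{a-x} = \gamma^{-1}\|x-a\|^2$, recalling $a = J_{\gamma A}(x + \gamma x^*)$. For \cref{M6}, the same relation gives $\|x^* - a^*\|^2 = \gamma^{-2}\|x-a\|^2$, whence $\|(x,x^*)-(a,a^*)\|^2 = \|x-a\|^2 + \|x^*-a^*\|^2 = (1 + \gamma^{-2})\|x-a\|^2$. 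For \cref{M8}, I would compute $(x - a) - (x^* - a^*) = (x-a) + \gamma^{-1}(x-a) = (1 + \gamma^{-1})(x-a)$ and take squared norms.

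Finally, for the equivalences in \cref{e:key1=0}: the first, $x^* \in Ax \Leftrightarrow a = x$, is precisely the last equivalence in \cref{e:basic}; the equivalence $a = x \Leftrightarrow a^* = x^*$ drops out of the definition of $a^*$ (equivalently, of $x^* - a^* = \gamma^{-1}(a - x)$, since $\gamma \neq 0$); and $a = x \Leftrightarrow \scal{a-x}{x^*-a^*} = 0$ follows from \cref{e:key1}, because $\gamma > 0$ forces $\|x - a\|^2 = 0$ whenever the inner product vanishes. I do not anticipate any genuine obstacle here; the lemma is essentially bookkeeping, and the only point demanding care is getting the sign and the factor $\gamma^{-1}$ in $x^* - a^* = \gamma^{-1}(a - x)$ exactly right, since every subsequent identity rests on it.
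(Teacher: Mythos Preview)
Your proposal is correct and follows essentially the same approach as the paper: both derive everything from the single relation $x^*-a^* = \gamma^{-1}(a-x)$ (the paper's \cref{e:220426b}) combined with \cref{e:basic}. Your handling of \cref{M8} by first factoring $(x-a)-(x^*-a^*)=(1+\gamma^{-1})(x-a)$ is slightly slicker than the paper's expansion of the squared norm, but this is cosmetic.
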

\begin{proof}
Clearly, \cref{M1} implies \cref{M3}. 
By applying the classical Minty parametrization to $\gamma A$, 
we have $(a,\gamma a^*)\in\gr(\gamma A)$ and thus $(a,a^*)\in \gr A$, 
i.e., \cref{M2} holds. 
Note that 
\begin{equation}
\label{e:220426a}
x-a = x-J_{\gamma A}(x+\gamma x^*)
\end{equation}
and 
\begin{subequations}
\label{e:220426b}
\begin{align}
x^*-a^* 
&= \frac{\gamma x^*-\gamma a^*}{\gamma}
= \frac{\gamma x^* - \big(x+\gamma x^*-J_{\gamma A}(x+\gamma x^*\big)}{\gamma}\\
&= -\frac{1}{\gamma}\big(x - J_{\gamma A}(x+\gamma x^*)\big). 
\end{align}
\end{subequations}
Combining \cref{e:220426a} and \cref{e:220426b}, we deduce 
\begin{align}
\label{e:220505c}
-\scal{x-a}{x^*-a^*} 
&= 
\frac{\|x-J_{\gamma A}(x+\gamma x^*)\|^2}{\gamma},
\end{align}
which is \cref{e:key1}. 
Next, using \cref{e:220426a} and \cref{e:220426b} again, we obtain 
\begin{subequations}
\label{e:220505d}
\begin{align}
\|(x,x^*)-(a,a^*)\|^2
&= 
\|x-a\|^2 + \|x^*-a^*\|^2\\
&= 
\|x-J_{\gamma A}(x+\gamma x^*)\|^2 
+ \frac{\|x-J_{\gamma A}(x+\gamma x^*)\|^2}{\gamma^2}, 
\end{align}
\end{subequations}
which yields \cref{M6}. 
Moreover, using \cref{e:220426a}, \cref{e:220426b}, and 
\cref{e:220505c}, we deduce that 
\begin{subequations}
\begin{align}
\big\|(x-a)-(x^*-a^*)\big\|^2
&= \|x-a\|^2 + \|x^*-a^*\|^2 - 2\scal{x-a}{x^*-a^*}\\
&= \big(1+\tfrac{1}{\gamma^2}+\tfrac{2}{\gamma} \big)\|x-J_{\gamma A}(x+\gamma x^*)\|^2, 
\end{align}
\end{subequations}
which yields \cref{M8}.
We now turn to \cref{e:key1=0}. 
We rewrite \cref{e:220505d} as 
$\|x-a\|^2+\|x^*-a^*\|^2 
= \big(1+\tfrac{1}{\gamma^2}\big)\|x-a\|^2$.
Hence, invoking also \cref{e:220505c}, we obtain
\begin{equation}
\gamma\|x^*-a^*\|^2 = 
\tfrac{1}{\gamma}\|x-a\|^2 = -\scal{x-a}{x^*-a^*},
\end{equation}
which yields the equivalences 
%\cref{e:key1=0}. 
$x=a$
$\Leftrightarrow$
$x^*=a^*$
$\Leftrightarrow$
$\scal{x-a}{x^*-a^*}=0$.
% $\Leftrightarrow$
% $(x,x^*)=(a,a^*)$.
If $x=a$, then $(x,x^*)=(a,a^*)\in \gr A$ by \cref{M2}.
And if $(x,x^*)\in\gr A$, 
then \cref{e:basic} yields $x=a$.
All this proves \cref{e:key1=0}.
\end{proof}

We now record a duality result.

\begin{lemma}
\label{l:dually}
Let $x,x^*$ be in $X$, and let $\gamma>0$.
Then 
\begin{equation}
\label{e:220509a}
x^*-J_{\gamma^{-1} A^{-1}}(x^*+\gamma^{-1} x)
= -\gamma^{-1}\big(x-J_{\gamma A}(x+\gamma x^*)\big);
\end{equation}
consequently,
\begin{equation}
\label{e:220509b}
\frac{\|x^*-J_{\gamma^{-1} A^{-1}}(x^*+\gamma^{-1} x)\|^2}{\gamma^{-1}}
=\frac{\|x-J_{\gamma A}(x+\gamma x^*)\|^2}{\gamma}. 
\end{equation}
\end{lemma}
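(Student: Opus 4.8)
The plan is to reduce everything to the Minty parametrization of \cref{l:Minty}, exploiting the symmetry between $A$ and $A^{-1}$. First I would set $a := J_{\gamma A}(x+\gamma x^*)$ and $a^* := \gamma^{-1}\big(x+\gamma x^* - J_{\gamma A}(x+\gamma x^*)\big)$ exactly as in \cref{M1}. Then \cref{M2,M3} of \cref{l:Minty} give $(a,a^*)\in\gr A$ and $x+\gamma x^* = a+\gamma a^*$; equivalently, $(a^*,a)\in\gr A^{-1}$, and dividing the last identity by $\gamma$ yields $x^*+\gamma^{-1}x = a^*+\gamma^{-1}a$.

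Next I would apply the Minty parametrization \cref{e:basic} to the maximally monotone operator $A^{-1}$ with parameter $\gamma^{-1}>0$: since $a\in A^{-1}a^*$, we have $a^* = J_{\gamma^{-1} A^{-1}}(a^*+\gamma^{-1}a)$. Substituting the identity $a^*+\gamma^{-1}a = x^*+\gamma^{-1}x$ from the previous step gives $J_{\gamma^{-1} A^{-1}}(x^*+\gamma^{-1}x) = a^*$. Hence $x^*-J_{\gamma^{-1} A^{-1}}(x^*+\gamma^{-1}x) = x^*-a^*$, and rearranging $x+\gamma x^* = a+\gamma a^*$ gives $\gamma(x^*-a^*) = -(x-a)$, i.e.\ $x^*-a^* = -\gamma^{-1}\big(x-J_{\gamma A}(x+\gamma x^*)\big)$. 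This is precisely \cref{e:220509a}.

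Finally, to obtain \cref{e:220509b} I would simply take the squared norm of both sides of \cref{e:220509a}, which gives $\|x^*-J_{\gamma^{-1} A^{-1}}(x^*+\gamma^{-1}x)\|^2 = \gamma^{-2}\|x-J_{\gamma A}(x+\gamma x^*)\|^2$, and then multiply through by $\gamma$ to rewrite it as $\|x^*-J_{\gamma^{-1} A^{-1}}(x^*+\gamma^{-1}x)\|^2/\gamma^{-1} = \|x-J_{\gamma A}(x+\gamma x^*)\|^2/\gamma$.

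I do not expect a genuine obstacle here; the only point requiring care is the bookkeeping with the step length $\gamma$ versus its reciprocal $\gamma^{-1}$, and making sure that the argument of $J_{\gamma^{-1} A^{-1}}$ is rewritten correctly via the divided identity $x^*+\gamma^{-1}x = a^*+\gamma^{-1}a$. The conceptual content is just that the Minty parametrization of $A$ at scale $\gamma$ and that of $A^{-1}$ at scale $\gamma^{-1}$ describe the very same point $(a,a^*)$ of $\gr A$, with the roles of the two coordinates (and of the ``primal'' and ``dual'' step lengths) interchanged.
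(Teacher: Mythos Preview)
Your proof is correct and self-contained, but it takes a different route from the paper. The paper's argument invokes \cite[Proposition~23.20]{BC2017}, which is the resolvent identity $J_{\gamma^{-1}A^{-1}} = \Id - \gamma^{-1}J_{\gamma A}\circ(\gamma\Id)$, and then just expands this at the point $x^*+\gamma^{-1}x$ to obtain \cref{e:220509a} in three lines. Your approach instead stays entirely within the paper's internal framework: you reuse \cref{l:Minty} to produce $(a,a^*)\in\gr A$ with $x+\gamma x^*=a+\gamma a^*$, observe that dividing by $\gamma$ gives the analogous Minty decomposition $x^*+\gamma^{-1}x=a^*+\gamma^{-1}a$ for $A^{-1}$ at scale $\gamma^{-1}$, and conclude $J_{\gamma^{-1}A^{-1}}(x^*+\gamma^{-1}x)=a^*$ via \cref{e:basic}. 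What your route buys is conceptual transparency---it makes visible that the two resolvents are computing the \emph{same} pair $(a,a^*)$ with the roles of primal and dual swapped---and independence from the external citation. What the paper's route buys is brevity, since the cited identity packages exactly this computation.
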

\begin{proof}
Using \cite[Proposition~23.20]{BC2017}, we have
\begin{subequations}
\begin{align}
x^*-J_{\gamma^{-1} A^{-1}}(x^*+\gamma^{-1} x)
&= x^* - \big(\Id-\gamma^{-1}J_{\gamma A}\circ \gamma\Id\big)(x^*+\gamma^{-1} x)\\
&= x^* - (x^*+\gamma^{-1} x)+\gamma^{-1}J_{\gamma A}(x+\gamma x^*)\\
&= -\gamma^{-1}\big(x-J_{\gamma A}(x+\gamma x^*)\big),
\end{align}
\end{subequations}
which is \cref{e:220509a} and from which \cref{e:220509b} follows.
\end{proof}

\section{The Carlier bound and duality}

\label{sec:Carlier}

This section contains a review of Carlier's inequality and a new duality result. 

\begin{definition} {\bf (Carlier bound)}
Recall \cref{e:A}, let $x$ and $x^*$ be in $X$, and let $\gamma>0$.
We define the associated \emph{Carlier bound} by 
\begin{equation}
\label{e:Carlier}
C(x,x^*) := C_{A,\gamma}(x,x^*) := \frac{\|x-J_{\gamma A}(x+\gamma x^*)\|^2}{\gamma}. 
\end{equation}
If $A=\partial f$, then 
\begin{equation}
C(x,x^*) = C_{\partial f,\gamma}(x,x^*) = \frac{\|x-\prox_{\gamma f}(x+\gamma x^*)\|^2}{\gamma}. 
\end{equation}
\end{definition}

\begin{theorem} {\bf (Carlier)}
\label{t:CarlierA}
Recall \cref{e:A}, and let $x,x^*$ be in $X$. 
Then 
\begin{equation}
(\forall \gamma>0)\quad
F_A(x,x^*)-\scal{x}{x^*} \geq C_{A,\gamma}(x,x^*). 
\end{equation}
\end{theorem}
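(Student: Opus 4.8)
The plan is to produce a single explicit point of $\gr A$ whose contribution to the supremum defining $F_A$ is exactly the Carlier bound. First I would feed $x$, $x^*$, and $\gamma$ into \cref{l:Minty}: set $a := J_{\gamma A}(x+\gamma x^*)$ and $a^* := \gamma^{-1}\bigl(x+\gamma x^* - J_{\gamma A}(x+\gamma x^*)\bigr)$. Then \cref{M2} gives $(a,a^*)\in\gr A$, while \cref{e:key1} records that $\scal{a-x}{x^*-a^*} = \gamma^{-1}\|x-J_{\gamma A}(x+\gamma x^*)\|^2 = C_{A,\gamma}(x,x^*)$.

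Next I would unwind the left-hand side using the defining formula \cref{F_A} for the Fitzpatrick function:
\[
F_A(x,x^*) - \scal{x}{x^*}
= - \inf_{(b,b^*)\in\gr A}\scal{x-b}{x^*-b^*}
= \sup_{(b,b^*)\in\gr A}\scal{b-x}{x^*-b^*}.
\]
Since $(a,a^*)$ is an admissible competitor in this supremum, bounding the supremum below by its value there and substituting \cref{e:key1} yields at once
\[
F_A(x,x^*) - \scal{x}{x^*} \;\geq\; \scal{a-x}{x^*-a^*} \;=\; C_{A,\gamma}(x,x^*),
\]
and since $\gamma>0$ was arbitrary this is the claim.

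I do not anticipate a genuine obstacle here: the entire content is the observation that the resolvent-based Carlier bound coincides with the value at the Minty point $(a,a^*)$ of the affine objective whose supremum over $\gr A$ equals $F_A(x,x^*)-\scal{x}{x^*}$. The only point demanding a little care is getting the signs right when converting the infimum in \cref{F_A} into the supremum of $\scal{b-x}{x^*-b^*}$ and matching it to the sign convention in \cref{e:key1}; beyond that the argument is immediate. One might additionally remark — though it is not needed for the inequality — that equality forces $(a,a^*)$ to attain the supremum, which by \cref{e:220505a} and \cref{e:key1=0} occurs exactly when $x^*\in Ax$, in which case both sides vanish.
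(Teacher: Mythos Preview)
Your proof is correct and essentially identical to the paper's: define the Minty point $(a,a^*)$ via \cref{l:Minty}, note it lies in $\gr A$ by \cref{M2}, and plug it into the infimum definition \cref{F_A} of $F_A$, using \cref{e:key1} to identify the result with $C_{A,\gamma}(x,x^*)$.

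One caveat about your closing parenthetical: the claim that equality in $F_A(x,x^*)-\scal{x}{x^*}\geq C_{A,\gamma}(x,x^*)$ occurs \emph{exactly} when $x^*\in Ax$ is false. Attainment of the supremum at the Minty point $(a,a^*)$ does not force $F_A(x,x^*)=\scal{x}{x^*}$, so \cref{e:220505a} does not apply as you suggest. A concrete counterexample is \cref{ex:earlyId} (the energy $f=\tfrac12\|\cdot\|^2$, $A=\Id$) with $\gamma=1$: there $C_{\Id,1}(x,x^*)=\tfrac14\|x-x^*\|^2=F_{\Id}(x,x^*)-\scal{x}{x^*}$ for \emph{every} pair $(x,x^*)$, not just those with $x=x^*$. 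Since you explicitly flag this remark as optional, it does not affect the validity of the main argument.
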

\begin{proof}
(See also \cite[Section~2]{Carlier}.)
Let $\gamma>0$ and 
set 
\begin{equation}
a := J_{\gamma A}(x+\gamma x^*)
\;\;\text{and}\;\;
a^* := \frac{x+\gamma x^*-J_{\gamma A}(x+\gamma x^*)}{\gamma}.
\end{equation}
By \cref{F_A} and \cref{e:key1}, we have 
\begin{subequations}
\begin{align}
F_A(x,x^*) -  
\scal{x}{x^*} &= -\inf_{(b,b^*)\in\gr A}\scal{x-b}{x^*-b^*}\\
&\geq \scal{a-x}{x^*-a^*}\\
&=\frac{\|x-J_{\gamma A}(x+\gamma x^*)\|^2}{\gamma}\\
&=C_{A,\gamma}(x,x^*), 
\end{align}
\end{subequations}
as claimed. 
\end{proof}

\begin{theorem} {\bf (Carlier)}
\label{t:Carlier}
Let $x$ and $x^*$ be in $X$, and let $\gamma>0$.
Then 
\begin{equation}
\label{C(1.2)}
G_f(x,x^*) \geq F_{\partial f}(x,x^*)-\scal{x}{x^*} 
\geq C_{\partial f,\gamma}(x,x^*). 
\end{equation}
Moreover, we have the characterization
\begin{subequations}
\label{R1.2}
\begin{align}
&\hspace{-1 cm} G_f(x,x^*)=C_{\partial f,\gamma}(x,x^*)
\;\;\Leftrightarrow\;\;\\
&\big[\prox_{\gamma f}(x+\gamma x^*)\in \partial f^*(x^*)
\;\;\text{and}\;\;
x+\gamma x^* - \prox_{\gamma f}(x+\gamma x^*) \in \gamma \partial f(x)\big].
\end{align}
\end{subequations}
% In turn is equivalent to the existence of a vector $q\in X$ such that 
% $q\in Ax$ and $\{x,q\}\subseteq A(x+\gamma(x^*-q))$; if this is the case, then 
% $q$ is necessarily equal to $(x+\gamma x^*-J_{\gamma A}(x+\gamma x^*))/\gamma$. 
\end{theorem}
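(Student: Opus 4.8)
The plan is to treat the displayed chain \cref{C(1.2)} and the equality characterization \cref{R1.2} separately, as each follows by combining results already established above.

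For \cref{C(1.2)}, the first inequality is immediate from \cref{f:Fsub}: writing $G_f(x,x^*)=f(x)+f^*(x^*)-\scal{x}{x^*}=(f\oplus f^*)(x,x^*)-\scal{x}{x^*}$ and invoking $f\oplus f^*\geq F_{\partial f}$ gives $G_f(x,x^*)\geq F_{\partial f}(x,x^*)-\scal{x}{x^*}$. The second inequality is exactly \cref{t:CarlierA} applied with the maximally monotone operator $A:=\partial f$ (recall that subdifferentials of proper lsc convex functions are maximally monotone, so \cref{e:A} is met), together with the identity $J_{\gamma\partial f}=\prox_{\gamma f}$, which yields $C_{A,\gamma}(x,x^*)=C_{\partial f,\gamma}(x,x^*)$.

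For \cref{R1.2}, the idea is to exhibit $C_{\partial f,\gamma}(x,x^*)$ as an inner product arising from the Minty parametrization and then use the equality case of \cref{p:1}. Concretely, I would set $a:=\prox_{\gamma f}(x+\gamma x^*)=J_{\gamma\partial f}(x+\gamma x^*)$ and $a^*:=\gamma^{-1}\big(x+\gamma x^*-a\big)$. Then \cref{l:Minty} (with $A=\partial f$) guarantees $(a,a^*)\in\gr\partial f$, hence $G_f(a,a^*)=0$ by \cref{e:FY=}, and also $\scal{a-x}{x^*-a^*}=C_{\partial f,\gamma}(x,x^*)$ by \cref{e:key1}. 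Applying \cref{p:1} with $(y,y^*):=(a,a^*)$ then gives $G_f(x,x^*)=G_f(x,x^*)+G_f(a,a^*)\geq\scal{a-x}{x^*-a^*}=C_{\partial f,\gamma}(x,x^*)$, and — the point to be careful about — since $G_f(a,a^*)$ vanishes identically here, the combined equality in \cref{C(1.1)=} collapses to the single equality $G_f(x,x^*)=C_{\partial f,\gamma}(x,x^*)$; thus that equality holds if and only if $a^*\in\partial f(x)$ and $x^*\in\partial f(a)$.

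Finally I would rewrite these two membership conditions into the stated form. Since $f$ is proper, lsc, and convex, $x^*\in\partial f(a)\Leftrightarrow a\in\partial f^*(x^*)$, i.e., $\prox_{\gamma f}(x+\gamma x^*)\in\partial f^*(x^*)$; and $a^*\in\partial f(x)\Leftrightarrow\gamma a^*=x+\gamma x^*-a\in\gamma\partial f(x)$, i.e., $x+\gamma x^*-\prox_{\gamma f}(x+\gamma x^*)\in\gamma\partial f(x)$. This yields \cref{R1.2}. I do not anticipate a genuine analytic obstacle: the only thing requiring attention is the bookkeeping in the previous step, namely remembering that $G_f(a,a^*)=0$ so that the two-term equality of \cref{p:1} reduces to the one-term equality we want, and the routine conjugate-subdifferential inversion used to recast $x^*\in\partial f(a)$.
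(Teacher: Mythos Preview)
Your proposal is correct and follows essentially the same approach as the paper: both derive \cref{C(1.2)} by combining \cref{f:Fsub} with \cref{t:CarlierA}, and both obtain \cref{R1.2} by setting $(a,a^*)$ via the Minty parametrization, noting $G_f(a,a^*)=0$, and invoking the equality case \cref{C(1.1)=} of \cref{p:1}. Your write-up is in fact slightly more explicit than the paper's in spelling out the conjugate-subdifferential inversion $x^*\in\partial f(a)\Leftrightarrow a\in\partial f^*(x^*)$ needed to reach the stated form of \cref{R1.2}.
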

\begin{proof}
(See also \cite[Section~1]{Carlier}.)
Write $A=\partial f$ so that $A^{-1} = \partial f^*$. 
Now set 
\begin{equation}
a := J_{\gamma A}(x+\gamma x^*)
\;\;\text{and}\;\;
a^* := \frac{x+\gamma x^*-J_{\gamma A}(x+\gamma x^*)}{\gamma}.
\end{equation}
Then \cref{C(1.2)} follows from \cref{f:Fsub} and \cref{t:CarlierA}. 
We now derive this differently in order to characterize equality. 
By \cref{M2}, we have $(a,a^*)\in\gr A$. 
Hence, \cref{e:FY=} yields
\begin{equation}
G_f(a,a^*)=0. 
\end{equation}
Applying now \cref{C(1.1)} and \cref{e:key1} yields
\begin{subequations}
\label{e:220505e}
\begin{align}
G_f(x,x^*)&=G_f(x,x^*)+G_f(a,a^*)\\
&\geq 
\scal{a-x}{x^*-a^*}\\
&=\frac{\|x-J_{\gamma A}(x+\gamma x^*)\|^2}{\gamma}.
\end{align}
\end{subequations}
Moreover, thanks to \cref{C(1.1)=}, we have equality characterization 
\begin{equation}
G_f(x,x^*)=\frac{\|x-J_{\gamma A}(x+\gamma x^*)\|^2}{\gamma}
\;\;\Leftrightarrow\;\;
\big[a^*\in Ax\;\text{and}\;x^*\in Aa], 
\end{equation}
which is precisely \cref{R1.2}.
\end{proof}

\begin{remark}
In view of \cref{t:CarlierA}, 
the Carlier bound
\begin{equation}
C_{A,\gamma}(x,x^*) = \frac{\|x-J_{\gamma A}(x+\gamma x^*)\|^2}{\gamma} 
\end{equation}
is always less sharp than the Fitzpatrick function bound
\begin{equation}
F_{A}(x,x^*)-\scal{x}{x^*};
\end{equation}
however, Carlier's lower bound is sharper than the trivial lower bound 
$0$. 
Computing Fitzpatrick functions is not an easy task 
(see \cite{BMS}) --- there are many 
more examples of prox operators available (see \cite{BC2017} and 
\cite{Beck2}). 
See also \cite{BDL1} and \cite{BDL2} for recent work on Fitzpatrick functions and related objects. 
\end{remark}

Let us observe a new duality result, which links the Carlier bound of $A$
to that of $A^{-1}$: 

\begin{theorem} {\bf (duality)} 
\label{t:duality}
Recall \cref{e:A}, let $x,x^*$ be in $X$, and let $\gamma>0$. 
Then 
\begin{equation}
C_{A,\gamma}(x,x^*) = C_{A^{-1},\gamma^{-1}}(x^*,x). 
\end{equation}
\end{theorem}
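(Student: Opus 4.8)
The plan is to reduce this to \cref{l:dually}, which was proved precisely for this purpose. First I would unpack both sides using the definition of the Carlier bound in \cref{e:Carlier}: the left side $C_{A,\gamma}(x,x^*)$ is $\|x-J_{\gamma A}(x+\gamma x^*)\|^2/\gamma$, while the right side $C_{A^{-1},\gamma^{-1}}(x^*,x)$ is obtained by substituting the operator $A^{-1}$, the parameter $\gamma^{-1}$, and swapping the roles of the two points, giving $\|x^*-J_{\gamma^{-1}A^{-1}}(x^*+\gamma^{-1}x)\|^2/\gamma^{-1}$.

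Once both sides are written out in this form, the identity to be shown is exactly
\begin{equation*}
\frac{\|x-J_{\gamma A}(x+\gamma x^*)\|^2}{\gamma}
= \frac{\|x^*-J_{\gamma^{-1} A^{-1}}(x^*+\gamma^{-1} x)\|^2}{\gamma^{-1}},
\end{equation*}
which is precisely \cref{e:220509b} from \cref{l:dually} (read from right to left). So the proof is essentially a one-line appeal to that lemma, preceded by the bookkeeping of matching the definitions.

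I would therefore structure the proof as: (i) apply \cref{e:Carlier} to expand $C_{A,\gamma}(x,x^*)$; (ii) apply \cref{e:Carlier} with $A$ replaced by $A^{-1}$ and $\gamma$ replaced by $\gamma^{-1}$, and with the argument pair $(x^*,x)$ in place of $(x,x^*)$, to expand $C_{A^{-1},\gamma^{-1}}(x^*,x)$; (iii) observe that the resulting two expressions are equated by \cref{e:220509b}. A minor sanity check worth including is that $A^{-1}$ is again maximally monotone and $\gamma^{-1}>0$, so that the Carlier bound for $(A^{-1},\gamma^{-1})$ is well defined and \cref{l:dually} legitimately applies.

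I do not anticipate a genuine obstacle here; the only thing requiring care is the substitution in step (ii) — making sure the point $x^*$ plays the role of "$x$" and $x$ plays the role of "$x^*$" in the definition, so that the resolvent argument reads $x^* + \gamma^{-1} x$ and not something else. Since \cref{l:dually} was evidently stated with exactly this application in mind, the heavy lifting (the resolvent identity $x^* - J_{\gamma^{-1}A^{-1}}(x^*+\gamma^{-1}x) = -\gamma^{-1}(x - J_{\gamma A}(x+\gamma x^*))$, which comes from the inverse-resolvent formula \cite[Proposition~23.20]{BC2017}) has already been done, and this theorem is just the clean packaging of that computation into the language of Carlier bounds.
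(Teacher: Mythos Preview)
Your proposal is correct and matches the paper's approach exactly: the paper's proof is the single line ``Combine \cref{e:Carlier} with \cref{e:220509b},'' which is precisely your steps (i)--(iii). Your added remark about $A^{-1}$ being maximally monotone and $\gamma^{-1}>0$ is a reasonable sanity check but is not needed in the paper's terse version.
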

\begin{proof}
Combine \cref{e:Carlier} with \cref{e:220509b}.
\end{proof}

We conclude this section by outlining another possible area where
Carlier's inequality may be useful -- Bregman distances!

\begin{remark} {\bf (Bregman distance)} 
Recall \cref{e:f} and that the Bregman distance between $x\in X$ and $y\in\intdom f$
is defined by 
\begin{equation}
D_f(x,y) = f(x)-f(y)-\scal{x-y}{\nabla f(y)}. 
\end{equation}
Note that 
\begin{equation}
\label{e:220513a}
D_f(x,y) = G_f(x,\nabla f(y))\geq C_{\partial f,\gamma}(x,\nabla f(y))
\end{equation}
by \cref{t:Carlier}. 
The Bregman distance plays a role, e.g., when analyzing the proximal gradient method (PGM).
If $(y_n)_\nnn$ is the sequence generated by the PGM and $x$ is a solution, then 
$\sum_\nnn D_f(x,y_n)<\infty$ (see, e.g., the proof of \cite[Theorem~10.21]{Beck2}). It follows 
that $D_f(x,y_n)\to 0$ and also we learn from \cref{e:220513a} that 
\begin{equation}
\sum_{\nnn} C_{\partial f,\gamma}(x,\nabla f(y_n))= 
\sum_\nnn \frac{\|x-\prox_{\gamma f}(x+\gamma \nabla f(y_n))\|^2}{\gamma} <\pinf.
\end{equation}
This is a prototypical appearance of Carlier's inequality in the context of the 
analysis of algorithms. 
\end{remark}

\section{Asymptotic behaviour}

\label{sec:asymp}

Let us now analyze the behaviour of Carlier's bound
\begin{equation}
C_{A,\gamma}(x,x^*) = \frac{\|x-J_{\gamma A}(x+\gamma x^*)\|^2}{\gamma}
\end{equation}
when $\gamma\to 0^+$. Because of \cref{t:duality}, we also obtain information about the behaviour 
when $\gamma\to\pinf$.

%\medskip

\begin{theorem}
\label{t:asy}
Recall \cref{e:A}, let $x,x^*$ be in $X$, and let $\gamma>0$. 
Then the following hold: 
\begin{enumerate}
\item 
\label{t:asy1}
If $x\notin\cdom A$, then $\lim_{\gamma\to 0^+} C_{A,\gamma}(x,x^*)
= \pinf$.
\item 
\label{t:asy2}
If $x\in\dom A$, then $\lim_{\gamma\to 0^+} C_{A,\gamma}(x,x^*)=0$. 
\end{enumerate}
\end{theorem}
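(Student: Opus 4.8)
The plan is to treat the two claims separately, in each case controlling $\|x-J_{\gamma A}(x+\gamma x^*)\|$ by locating the point $a_\gamma:=J_{\gamma A}(x+\gamma x^*)$ relative to $\dom A$. The only background facts I use are that $\ran J_{\gamma A}=\dom(\gamma A)=\dom A$ for every $\gamma>0$ and that $J_{\gamma A}$ is nonexpansive; both are standard (see, e.g., \cite[Chapter~23]{BC2017}).

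For \cref{t:asy1}, note that $a_\gamma\in\ran J_{\gamma A}=\dom A\subseteq\cdom A$. Since $\cdom A$ is closed and $x\notin\cdom A$, the number $d:=\inf_{y\in\cdom A}\|x-y\|$ is strictly positive, so $\|x-a_\gamma\|\geq d$ for every $\gamma>0$. Hence
\begin{equation*}
C_{A,\gamma}(x,x^*)=\frac{\|x-a_\gamma\|^2}{\gamma}\geq\frac{d^2}{\gamma}\to+\infty\qquad\text{as }\gamma\to 0^+,
\end{equation*}
which is \cref{t:asy1}.

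For \cref{t:asy2}, since $x\in\dom A$ we may fix some $x^*_0\in Ax$. Applying \cref{e:basic} with $x^*_0$ in place of $x^*$ gives $x=J_{\gamma A}(x+\gamma x^*_0)$, so nonexpansiveness of $J_{\gamma A}$ yields
\begin{equation*}
\|x-J_{\gamma A}(x+\gamma x^*)\|=\|J_{\gamma A}(x+\gamma x^*_0)-J_{\gamma A}(x+\gamma x^*)\|\leq\gamma\|x^*_0-x^*\|.
\end{equation*}
Therefore $0\leq C_{A,\gamma}(x,x^*)\leq\gamma\|x^*_0-x^*\|^2$, and letting $\gamma\to 0^+$ gives $\lim_{\gamma\to 0^+}C_{A,\gamma}(x,x^*)=0$, which is \cref{t:asy2}.

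I do not expect a genuine obstacle. The two spots that warrant care are: in \cref{t:asy1}, that the distance from $x$ to the closed set $\cdom A$ is strictly positive precisely because $x\notin\cdom A$; and in \cref{t:asy2}, that choosing a selection $x^*_0\in Ax$ makes the Minty map fix $x$, which converts the nonexpansiveness estimate into an $O(\gamma)$ bound on $C_{A,\gamma}(x,x^*)$. A slightly longer alternative for \cref{t:asy2} is to combine \cref{e:key1} with the monotonicity of $A$ evaluated at the Minty pair from \cref{l:Minty} and at $(x,x^*_0)$, but the resolvent argument above is the most economical.
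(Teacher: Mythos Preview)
Your proof is correct. Part~\cref{t:asy1} is essentially identical to the paper's argument: both use that $\ran J_{\gamma A}=\dom A$ to bound $\|x-a_\gamma\|$ below by the (positive) distance from $x$ to $\cdom A$, and then divide by $\gamma$.

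Part~\cref{t:asy2}, however, takes a genuinely different and more economical route than the paper. The paper decomposes $x-J_{\gamma A}(x+\gamma x^*)$ through the intermediate point $J_{\gamma A}x$, obtaining three terms that involve the Yosida approximation $\moyo{A}{\gamma}x=(x-J_{\gamma A}x)/\gamma$; it then invokes the convergence $\moyo{A}{\gamma}x\to P_{Ax}(0)$ as $\gamma\to 0^+$ (from \cite[Corollary~23.46(i)]{BC2017}) to show each term vanishes. Your argument bypasses the Yosida machinery entirely: by selecting $x_0^*\in Ax$ you make $x$ itself a resolvent value via \cref{e:basic}, so a single application of nonexpansiveness gives the sharp bound $C_{A,\gamma}(x,x^*)\leq \gamma\|x_0^*-x^*\|^2$. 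This is shorter, uses only the most elementary resolvent facts, and even yields an explicit $O(\gamma)$ rate with a computable constant, whereas the paper's decomposition gives no explicit rate. The paper's approach, on the other hand, would adapt more readily to the boundary case $x\in\cdom A\smallsetminus\dom A$ (where no selection $x_0^*$ is available), though neither proof addresses that case.
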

\begin{proof}
\cref{t:asy1}: Suppose that $x\notin\cdom A$. 
Set $\delta := d_{\cdom A}(x)>0$. 
Because $\ran J_{\gamma A} = \dom(\gamma A) = \dom A$, we estimate 
\begin{equation}
\frac{\|x-J_{\gamma A}(x+\gamma x^*)\|^2}{\gamma}
\geq \frac{\delta^2}{\gamma}.
\end{equation}
This yields the conclusion. 

\cref{t:asy2}: Suppose that $x\in\dom A$. 
Recall that $\moyo{A}{\gamma}x = (x-J_{\gamma A}x)/\gamma$ by definition of the Yosida approximation. 
Because resolvents are nonexpansive, we have 
\begin{equation}
\label{e:220509c}
\|J_{\gamma A}x-J_{\gamma A}(x+\gamma x^*)\|\leq \gamma\|x^*\|. 
\end{equation}
Clearly, 
\begin{subequations}
\begin{align}
\|x-J_{\gamma A}(x+\gamma x^*)\|^2
&=
\|x-J_{\gamma A}x\|^2 + \|J_{\gamma A}x-J_{\gamma A}(x+\gamma x^*)\|^2\\
&\qquad +2\scal{x-J_{\gamma A}x}{J_{\gamma A}x-J_{\gamma A}(x+\gamma x^*)} 
\end{align}
\end{subequations}
and this implies 
\begin{subequations}
\label{e:220503a}
\begin{align}
\frac{\|x-J_{\gamma A}(x+\gamma x^*)\|^2}{\gamma}
&= \gamma\|\moyo{A}{\gamma}x\|^2
+\frac{\|J_{\gamma A}x-J_{\gamma A}(x+\gamma x^*)\|^2}{\gamma}\\
&\qquad + 2\scal{\moyo{A}{\gamma}x}{J_{\gamma A}x-J_{\gamma A}(x+\gamma x^*)}. 
\end{align}
\end{subequations}
Because $x\in\dom A$, we learn from \cite[Corollary~23.46(i)]{BC2017} that
\begin{equation}
\label{e:220509d}
\lim_{\gamma\to 0^+} \moyo{A}{\gamma}x = \moyo{A}{0}x= P_{Ax}(0). 
\end{equation}
Consider the three summands on the right side of \cref{e:220503a}. 
It suffices to show that each one of them goes to $0$ as $\gamma\to 0^+$. 
First, 
$\moyo{A}{\gamma}x\to P_{Ax}(0)$ and thus 
$\gamma\|\moyo{A}{\gamma}x\|^2\to 0\|P_{Ax}(0)\|^2 = 0$.
Second, \cref{e:220509c} yields 
$0\leq (1/\gamma)\|J_{\gamma A}x-J_{\gamma A}(x+\gamma x^*)\|^2
\leq \gamma\|x^*\|^2 \to 0$ and therefore
$(1/\gamma)\|J_{\gamma A}x-J_{\gamma A}(x+\gamma x^*)\|^2\to 0$. 
Thirdly, \cref{e:220509c} shows that 
$J_{\gamma A}x-J_{\gamma A}(x+\gamma x^*)\to 0$. 
Combined with \cref{e:220509d}, we deduce that 
$2\scal{\moyo{A}{\gamma}x}{J_{\gamma A}x-J_{\gamma A}(x+\gamma x^*)}\to 0$. 
\end{proof}

\begin{corollary}
\label{c:asy}
Recall \cref{e:A}, let $x,x^*$ be in $X$, and let $\gamma>0$. 
Then the following hold: 
\begin{enumerate}
\item 
\label{c:asy1}
If $x^*\notin\cran A$, then $\lim_{\gamma\to \pinf} C_{A,\gamma}(x,x^*)
= \pinf$.
\item 
\label{c:asy2}
If $x^*\in\ran A$, then $\lim_{\gamma\to \pinf} C_{A,\gamma}(x,x^*)=0$. 
\end{enumerate}
\end{corollary}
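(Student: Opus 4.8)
The plan is to obtain both statements from \cref{t:asy} by invoking the duality identity in \cref{t:duality}. First I would record the relevant bookkeeping: since $A$ is maximally monotone, so is $A^{-1}$, and $\dom A^{-1} = \ran A$, whence $\cdom A^{-1} = \cran A$. Consequently the hypothesis $x^*\notin\cran A$ is exactly $x^*\notin\cdom A^{-1}$, and the hypothesis $x^*\in\ran A$ is exactly $x^*\in\dom A^{-1}$; these are precisely the hypotheses that \cref{t:asy} requires of the operator $A^{-1}$ evaluated at the point $(x^*,x)$.

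Next I would translate the limit. By \cref{t:duality}, for every $\gamma>0$ we have $C_{A,\gamma}(x,x^*) = C_{A^{-1},\gamma^{-1}}(x^*,x)$. Since $t\mapsto t^{-1}$ is a bijection of $\opint{0,\pinf}$ onto itself carrying $\gamma\to\pinf$ to $\gamma^{-1}\to 0^+$, it follows that
\[
\lim_{\gamma\to\pinf} C_{A,\gamma}(x,x^*) = \lim_{\mu\to 0^+} C_{A^{-1},\mu}(x^*,x)
\]
in the sense that whenever the right-hand limit exists in $\RRX$, so does the left-hand one, with the same value.

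Finally, to prove \ref{c:asy1} I would apply \cref{t:asy}\ref{t:asy1} to the maximally monotone operator $A^{-1}$ at $(x^*,x)$: since $x^*\notin\cdom A^{-1}$, the right-hand limit above equals $\pinf$, hence so does $\lim_{\gamma\to\pinf} C_{A,\gamma}(x,x^*)$. To prove \ref{c:asy2} I would instead apply \cref{t:asy}\ref{t:asy2} to $A^{-1}$ at $(x^*,x)$: since $x^*\in\dom A^{-1}$, the right-hand limit equals $0$, hence so does $\lim_{\gamma\to\pinf} C_{A,\gamma}(x,x^*)$. I do not expect a genuine obstacle here; the only points needing care are the identification of $\dom A^{-1}$ and $\cdom A^{-1}$ with $\ran A$ and $\cran A$, and the change of parameter $\mu=\gamma^{-1}$ in the limit, both of which are routine.
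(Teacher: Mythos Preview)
Your proposal is correct and follows essentially the same approach as the paper: invoke the duality identity \cref{t:duality} to rewrite $C_{A,\gamma}(x,x^*)$ as $C_{A^{-1},\gamma^{-1}}(x^*,x)$, identify $\dom A^{-1}=\ran A$ (and hence $\cdom A^{-1}=\cran A$), and then apply \cref{t:asy} to $A^{-1}$. The paper's version is terser but the argument is the same.
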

\begin{proof}
\cref{t:duality} yields
\begin{equation}
C_{A,\gamma}(x,x^*) = C_{A^{-1},\gamma^{-1}}(x^*,x). 
\end{equation}
The result is now clear from \cref{t:asy} (applied to $A^{-1}$)
because $\ran A = \dom A^{-1}$. 
\end{proof}

\begin{corollary}
We have 
\begin{equation}
\dom \sup_{\gamma>0} C_{A,\gamma} \subseteq \cdom A \times \cran A.
\end{equation}
\end{corollary}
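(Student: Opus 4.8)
The plan is to prove the inclusion by contraposition. Suppose $(x,x^*)\in X\times X$ with $(x,x^*)\notin\cdom A\times\cran A$; I want to show that $\sup_{\gamma>0}C_{A,\gamma}(x,x^*)=\pinf$, equivalently that $(x,x^*)$ lies outside $\dom\sup_{\gamma>0}C_{A,\gamma}$ (recall that the domain of an $\RPX$-valued function is the set where it is finite). The assumption means precisely that $x\notin\cdom A$ or $x^*\notin\cran A$, and I would split into these two cases.

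In the case $x\notin\cdom A$, I would appeal to the first part of \cref{t:asy}, which already gives $\lim_{\gamma\to 0^+}C_{A,\gamma}(x,x^*)=\pinf$. Since $\sup_{\gamma>0}C_{A,\gamma}(x,x^*)$ dominates any one-sided limit of the family $(C_{A,\gamma}(x,x^*))_{\gamma>0}$, it follows that $\sup_{\gamma>0}C_{A,\gamma}(x,x^*)=\pinf$. In the case $x^*\notin\cran A$, I would instead use the first part of \cref{c:asy} (the dual statement derived earlier from \cref{t:duality}), which yields $\lim_{\gamma\to\pinf}C_{A,\gamma}(x,x^*)=\pinf$, and again the supremum is $\pinf$. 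In either case $(x,x^*)\notin\dom\sup_{\gamma>0}C_{A,\gamma}$, which is what contraposition requires, so $\dom\sup_{\gamma>0}C_{A,\gamma}\subseteq\cdom A\times\cran A$.

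I do not expect any genuine obstacle here: the substantive work is entirely contained in \cref{t:asy} and \cref{c:asy}, and the only elementary observation needed is that a supremum over $\gamma>0$ bounds below any limit taken as $\gamma\to 0^+$ or as $\gamma\to\pinf$. The only mild subtlety is bookkeeping about the convention for $\dom$ of an extended-real-valued function (finiteness versus $\pinf$) and the fact that $(x,x^*)\notin\cdom A\times\cran A$ amounts exactly to failure in one of the two coordinates — both of which are immediate.
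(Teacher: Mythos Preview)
Your proposal is correct and follows essentially the same approach as the paper: the paper's proof is simply ``Combine \cref{t:asy} with \cref{c:asy},'' and your contrapositive argument splitting into the two cases $x\notin\cdom A$ and $x^*\notin\cran A$ is exactly what that combination amounts to.
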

\begin{proof}
Combine \cref{t:asy} with \cref{c:asy}. 
\end{proof}

\section{Examples}
\label{sec:examples}

In this section, we collect several examples to illustrate our results.

\begin{example} {\bf (indicator of a subspace)} 
Suppose that $A=N_U$, where $U$ is a closed linear subspace of $X$.
By \cite[Example~3.1]{BMS}, the Fitzpatrick bound at $(x,x^*)\in X\times X$ is 
\begin{equation}
F_{N_U}(x,x^*) -\scal{x}{x^*}= \iota_U(x) + \iota_{U^\perp}(x^*)-\scal{x}{x^*} 
= \big(\iota_U \oplus \iota_{U^\perp}\big)(x,x^*). 
\end{equation}
Let $\gamma>0$. Because $J_{\gamma A}=P_U$ is linear, we compute 
Carlier's bound via
\begin{subequations}
\begin{align}
C_{N_U,\gamma}(x,x^*)
&= \frac{\|x-P_{U}(x+\gamma x^*)\|^2}{\gamma}
= \frac{\|P_{U^\perp}x\|^2+\gamma^2\|P_Ux^*\|^2}{\gamma}\\
&= \frac{1}{\gamma}\|P_{U^\perp}x\|^2 + \gamma\|P_Ux^*\|^2.
\end{align}
\end{subequations}
Note that $X\times X\times \RPP\to\RR\colon (x,x^*,\gamma)\mapsto C_{N_U,\gamma}(x,x^*)$ is 
\emph{not} convex; however, 
$X\times X\to\RR\colon (x,x^*)\mapsto C_{N_U,\gamma}(x,x^*)$ 
and 
$\RPP\to\RR\colon \gamma\mapsto C_{N_U,\gamma}(x,x^*)$ are convex. 
Let us discuss further 
\begin{equation}
\RPP\to\RR\colon \gamma\mapsto C_{N_U,\gamma}(x,x^*).
\end{equation}
This function is 
(i) strictly increasing if $x\in U$ and $x^*\notin U^\perp$;
(ii) strictly decreasing if $x\notin U$ and $x^*\in U^\perp$;
(iii) first strictly decreasing then strictly increasing if
$x\notin U$ and $x^*\notin U^\perp$; 
(iv) identically equal to $0$ if $x\in U$ and $x^*\in U^\perp$. 
Moreover,
\begin{equation}
\lim_{\gamma\to 0^+} C_{N_U,\gamma}(x,x^*) = \iota_U(x)
\;\;\text{and}\;\;
\lim_{\gamma\to \pinf} C_{N_U,\gamma}(x,x^*) = \iota_{U^\perp}(x^*). 
\end{equation}
It follows that 
\begin{equation}
\sup_{\gamma>0} C_{N_U,\gamma}(x,x^*)=\iota_U(x)+\iota_{U^\perp}(x^*)
\end{equation}
coincides with the Fitzpatrick bound in this case. 
\end{example}

\begin{example} {\bf (energy)} 
\label{ex:earlyId}
Suppose that $f=\thalb\|\cdot\|^2$ and hence $\nabla f = \Id$. 
By \cite[Example~3.10]{BMS}, the Fitzpatrick bound at $(x,x^*)\in X\times X$ is 
\begin{equation}
F_{\Id}(x,x^*)-\scal{x}{x^*} = \tfrac{1}{4}\|x+x^*\|^2 - \scal{x}{x^*}=
\tfrac{1}{4}\|x-x^*\|^2. 
\end{equation}
Let $\gamma>0$. Then $\prox_{\gamma f} = (\Id+\gamma\Id)^{-1}= (1+\gamma)^{-1}\Id$ and hence Carlier's bound is 
\begin{subequations}
\label{e:CearlyId}
\begin{align}
C_{\Id,\gamma}(x,x^*) 
&= \frac{\|x-\prox_{\gamma f}(x+\gamma x^*)\|^2}{\gamma}
= \frac{\|x-(1+\gamma)^{-1}(x+\gamma x^*)\|^2}{\gamma}\\
&= \frac{\gamma}{(1+\gamma)^2}\|x-x^*\|^2. 
\end{align}
\end{subequations}
If $x\neq x^*$, then 
$\gamma\mapsto C_{\Id,\gamma}(x,x^*)$ is strictly concave
on $\left]0,2\right]$, strictly convex on $\left[2,+\infty\right[$, 
and its unique global maximizer is $\gamma=1$ for which 
$C_{\Id,1}(x,x^*)=\tfrac{1}{4}\|x-x^*\|^2 = F_{\Id}(x,x^*)$. 
\end{example}

\begin{example} {\bf (skew rotator)} 
Suppose that $X=\RR^2$ and that 
$A\colon\RR^2\to\RR^2\colon(x_1,x_2)\mapsto(-x_2,x_1)$, the counter-clockwise
rotator by $\pi/2$, which is a skew isometry. 
By \cite[Proposition~7.4]{BBW07}, 
\begin{equation}
F_A(x,x^*) = \iota_{\gr A}(x,x^*)=F_A(x,x^*)-\scal{x}{x^*}.
\end{equation}
Let $\gamma>0$. Then $J_{\gamma A} = (1+\gamma^2)^{-1}(\Id-\gamma A)\colon 
(x_1,x_2)\mapsto (1+\gamma^2)^{-1}(x_1+\gamma x_2,-\gamma x_1 + x_2)$. 
Thus, after some algebra, we find that Carlier's bound is 
\begin{align}
C_{A,\gamma}(x,x^*) 
&=
\frac{\|x-J_{\gamma A}(x+\gamma x^*)\|^2}{\gamma}\\
&=
\frac{\gamma}{1+\gamma^2}\|Ax-x^*\|^2.
\end{align}
Therefore, if $Ax\neq x^*$, then 
$\gamma\mapsto C_{A,\gamma}(x,x^*)$ is strictly concave
on $\big]0,\sqrt{3}\big]$, strictly convex on 
$\textstyle \big[\sqrt{3},+\infty\big[$, 
and its unique global maximizer is $\gamma=1$ for which 
$C_{A,1}(x,x^*)=\tfrac{1}{2}\|Ax-x^*\|^2$. 
\end{example}

We now focus on the case when $X=\RR$
and thus $A=\partial f$. 
For ease of notation, we will use $(x,y)$ instead of $(x,x^*)$
as we do elsewhere. 

\begin{example} {\bf ((negative) Burg entropy)} 
Suppose that $f(x) = -\ln(x)$ when $x>0$, and $\pinf$ elsewhere, 
and let $\gamma>0$. 
It is known 
(see, e.g., \cite[Example~6.9]{Beck2} and \cite[Example~24.40]{BC2017}) 
that for $z\in\RR$, 
\begin{equation}
\prox_{\gamma f}(z) = \frac{z+\sqrt{z^2+4\gamma}}{2}. 
\end{equation}
Hence, for $(x,y)\in\RR^2$, 
\begin{subequations}
\begin{align}
C_{\partial f,\gamma}(x,y)
&= 
\frac{|x-\prox_{\gamma f}(x+\gamma y)|^2}{\gamma}\\
&= \frac{\big(x-\tfrac{1}{2}(x+\gamma y)-\tfrac{1}{2}\sqrt{(x+\gamma y)^2+4\gamma}\big)^2}{\gamma}\\
&= 
\frac{\big((x-\gamma y)-\sqrt{(x+\gamma y)^2+4\gamma}\big)^2}{4\gamma};
\end{align}
\end{subequations}
in particular, 
\begin{subequations}
\label{e:220517b}
\begin{align}
C_{\partial f,\gamma}(0,y)
&= 
\frac{\big((0-\gamma y)-\sqrt{(0+\gamma y)^2+4\gamma}\big)^2}{4\gamma}\\
&= 
\frac{\big(\gamma y+\sqrt{(\gamma y)^2+4\gamma}\big)^2}{4\gamma}\\
&= 
\frac{\big(\sqrt{\gamma}\sqrt{\gamma}y+\sqrt{\gamma}\sqrt{\gamma y^2+4}\big)^2}{(2\sqrt{\gamma})^2}\\
&=\bigg(\frac{\sqrt{\gamma}y + \sqrt{\gamma y^2+4}}{2}\bigg)^2\\
&\to 1 \qquad\text{as $\gamma\to 0^+$.}
\end{align}
\end{subequations}
Combining with \cref{t:asy}, we obtain
\begin{equation}
\lim_{\gamma\to 0^+} C_{\partial f,\gamma}(x,y) = 
\begin{cases}
+\infty, &\text{if $x<0$;}\\
1, &\text{if $x=0$;}\\
0, &\text{if $x>0$}
\end{cases}
\end{equation}
which is convex --- but not lower semicontinuous --- as a function of $x$. 
%\hl{To do:} Compare to \cite[Example~3.4]{BMS}. 
\end{example}

\begin{example} {\bf ((negative) Boltzmann-Shannon entropy)} 
Suppose that $f$ at $x\in\RR$ is defined by 
\begin{equation}
f(x)=\begin{cases}
+\infty,&\text{if }x<0;\\
0,&\text{if }x=0;\\
x\ln(x)-x,&\text{if }x>0.
\end{cases}
\end{equation}
We start by showing that 
\begin{equation}
\label{e:220517a}
\prox_{\gamma f} (x)=\gamma  W\parens*{\frac{1}{\gamma} 
\exp\Big(\frac{x}{\gamma}\Big)}% \text{ for },
\end{equation}
where $x\in\RR$ and 
where $W$ is the \emph{Lambert W-function} as defined in \cite[Equation 1.5]{W}.
To see that, recall that (see, e.g., \cite[Proposition~24.1]{BC2017}) the 
characterization of the proximal mapping
\begin{equation}
p=\prox_{\gamma f} (x) \siff \gamma\nabla f(p) + p=x\siff
\gamma \ln (p)+p=x,
\end{equation}
where $x\in\RR$ and $p>0$. 
Hence
\begin{equation}
\frac{p}{\gamma}\exp\Big(\frac{p}{\gamma}\Big)=\frac{1}{\gamma}\exp\Big(\frac{x}{\gamma}\Big) \siff p=\gamma W\parens*{\frac{1}{\gamma}\exp\Big(\frac{x}{\gamma}\Big)}
\end{equation}
by the very definition of the $W$ function, and this verifies \cref{e:220517a}. 
Therefore
\begin{subequations}
\begin{align}
C_{\partial f,\gamma}(x,y)
&= 
\frac{|x-\prox_{\gamma f}(x+\gamma y)|^2}{\gamma}\\
&= \frac{\parens*{x-\gamma W\parens*{\frac{1}{\gamma}
\exp\big((x+\gamma y)/\gamma\big)}}^2}{\gamma}.
\end{align}
\end{subequations}
In particular, 
\begin{subequations}
\begin{align}
C_{\partial f,\gamma}(0,y) 
&= \gamma\parens*{ W\parens*{\tfrac{1}{\gamma}\exp( y)}}^2\\
&=\frac{\parens*{ W\parens*{\frac{1}{\gamma}\exp( y)}}^2}{\tfrac{1}{\gamma}}. 
\end{align}
\end{subequations}
We wish to take now the limit as $\gamma\to 0^+$.
As numerator and denominator tend to $\pinf$, we shall use 
L'Hospital's rule. Using the fact that 
\begin{equation}
W'(z) = \frac{1}{(1 + W(z)) \exp(W(z))},
\end{equation}
we obtain
 \begin{equation}
 \lim_{\gamma\to 0^+} C_{\partial f,\gamma}(0,y)
 = 
 \lim_{\gamma\to 0^+}\frac{2 W\big(\frac{1}{\gamma}\exp( y)\big)\exp( y)}{\exp\Big(W\big(\frac{1}{\gamma}\exp( y)\big)\Big) \parens*{1+W\big(\frac{1}{\gamma}\exp( y)\big)}}.
\end{equation}
Changing variables via
$u=W\big(\frac{1}{\gamma}\exp( y)\big)$,
we finally obtain 
\begin{equation}
\label{e:220517c}
 \lim_{\gamma\to 0^+} C_{\partial f,\gamma}(0,y)
 = 
\lim_{u\to+\infty}\frac{2u\exp(y)}{\exp(u)(1+u)}=0.
\end{equation}
Combining this with \cref{t:asy} gives us
\begin{equation}
\lim_{\gamma\to 0^+} C_{\partial f,\gamma}(x,y) = 
\begin{cases}
+\infty, &\text{if $x<0$;}\\
0, &\text{if $x\geq 0$}.
\end{cases}
\end{equation}
\end{example}

\begin{remark}
The formulas \cref{e:220517b} and \cref{e:220517c} illustrate
that the asymptotic behaviour at boundary points does not seem
to follow a simple pattern and thus warrants further study.
\end{remark}

\section{Cyclic monotonicity}

\label{sec:ncyc}

In this section, we extend the analysis to $n$-cyclically monotone 
operators. 
Recall that $A$ is $n$-cyclically monotone, where $n\in\{2,3,\ldots\}$, if 
\begin{equation}
\left. 
\begin{array}{c}
(a_1,a_1^*)\in\gr A\\
(a_2,a_2^*)\in\gr A\\
\;\;\vdots\\
(a_n,a_n^*)\in\gr A\\
a_{n+1}=a_1
\end{array}
\right\}
\;\;\Rightarrow\;\;
\sum_{k=1}^{n} \scal{a_{k+1}-a_k}{a_k^*}\leq 0. 
\end{equation}
$2$-cyclic monotonicity is just regular monotonicity.
The \emph{Fitzpatrick function of order $n$}, $F_{A,n}$, 
evaluated at $(x,x^*)\in X\times X$, is the supremum over 
$(a_1,a_1^*),\ldots,(a_{n-1},a_{n-1}^*)$ in $\gr A$ of the expression
\begin{equation}
\label{e:FAn}
\scal{x}{x^*} + \scal{x-a_{n-1}}{a_{n-1}^*} + \scal{a_1-x}{x^*} + 
\sum_{k=1}^{n-2}\scal{a_{k+1}-a_k}{a_k^*}.
\end{equation}
As a supremum of continuous affine functions, 
the function $F_{A,n}$ is convex and lower semicontinuous. 
We also set $F_{A,\infty} = \sup_{n\geq 2} F_{A,n}$. 

\begin{fact} (See \cite[Corollary~2.8]{BBBRW}.)
Suppose that $A$ is maximally $n$-cyclically monotone. 
Then $F_{A,n}>\scal{\cdot}{\cdot}$ outside $\gr A$, while $F_{A,n}=\scal{\cdot}{\cdot}$ on $\gr A$. 
\end{fact}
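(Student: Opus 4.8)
The plan is to prove the two assertions separately: the equality on $\gr A$ is elementary, whereas the strict inequality off $\gr A$ is exactly where the maximality hypothesis enters.

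First I would record that $F_{A,n}(x,x^*)=\scal{x}{x^*}$ for every $(x,x^*)\in\gr A$. For the inequality ``$\leq$'', fix any chain $(a_1,a_1^*),\ldots,(a_{n-1},a_{n-1}^*)$ in $\gr A$; appending $(a_n,a_n^*):=(x,x^*)$ and putting $a_{n+1}:=a_1$ produces a length-$n$ cycle in $\gr A$, so $n$-cyclic monotonicity gives $\sum_{k=1}^{n}\scal{a_{k+1}-a_k}{a_k^*}\leq 0$; since this sum equals the expression in \cref{e:FAn} minus $\scal{x}{x^*}$, we get that \cref{e:FAn} is $\leq\scal{x}{x^*}$, and taking the supremum over the chain proves the bound. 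For ``$\geq$'', the constant chain $a_1=\cdots=a_{n-1}=x$ and $a_1^*=\cdots=a_{n-1}^*=x^*$ is admissible (because $(x,x^*)\in\gr A$) and makes \cref{e:FAn} collapse to $\scal{x}{x^*}$.

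Next, suppose $(x,x^*)\notin\gr A$. By maximal $n$-cyclic monotonicity the enlarged set $\gr A\cup\{(x,x^*)\}$ fails to be $n$-cyclically monotone, so there is a cycle $(b_1,b_1^*),\ldots,(b_n,b_n^*)$ in $\gr A\cup\{(x,x^*)\}$ with $b_{n+1}:=b_1$ and $\sum_{k=1}^n\scal{b_{k+1}-b_k}{b_k^*}>0$. Since $\gr A$ itself is $n$-cyclically monotone, at least one $(b_k,b_k^*)$ must equal $(x,x^*)$. In the favourable situation where this happens for exactly one index, a cyclic rotation (which does not change the cyclic sum) puts it at $k=1$; then $(b_2,b_2^*),\ldots,(b_n,b_n^*)\in\gr A$, and relabelling $a_k:=b_{k+1}$ for $k=1,\ldots,n-1$ turns $\sum_{k=1}^n\scal{b_{k+1}-b_k}{b_k^*}$ into precisely the quantity in \cref{e:FAn} minus $\scal{x}{x^*}$. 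Hence $F_{A,n}(x,x^*)\geq\scal{x}{x^*}+\sum_{k=1}^n\scal{b_{k+1}-b_k}{b_k^*}>\scal{x}{x^*}$.

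The step I expect to be the main obstacle is the case where $(x,x^*)$ occurs several times in the violating cycle, since then the cycle does not directly have the shape required by the definition of $F_{A,n}$. I would handle this by cutting the cycle at the occurrences of $(x,x^*)$: this writes $\sum_{k=1}^n\scal{b_{k+1}-b_k}{b_k^*}$ as a sum of ``arc'' contributions, each arc running from $(x,x^*)$ back to $(x,x^*)$ with all intermediate vertices in $\gr A$. Some arc has strictly positive contribution, hence length at least $2$ and at least one genuine $\gr A$-vertex $(c,c^*)$; inserting a repeated copy of $(c,c^*)$ changes the cyclic sum only by the null term $\scal{c-c}{c^*}=0$ and pads the arc back to a cycle of length $n$ in which $(x,x^*)$ appears exactly once and the total is still positive. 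This reduces the multiple-occurrence case to the single-occurrence case already treated, and assembling both cases yields $F_{A,n}>\scal{\cdot}{\cdot}$ outside $\gr A$ together with $F_{A,n}=\scal{\cdot}{\cdot}$ on $\gr A$.
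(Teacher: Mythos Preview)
The paper itself does not prove this statement --- it is recorded as a Fact with a citation to \cite[Corollary~2.8]{BBBRW} and no argument is given, so there is nothing in the paper to compare your proof against. Your argument is nonetheless correct: the equality on $\gr A$ follows directly from the definition of $F_{A,n}$ together with $n$-cyclic monotonicity (your appended cycle and constant-chain choices are exactly what is needed), and for the strict inequality outside $\gr A$ your use of maximality to extract a violating cycle, followed by the arc decomposition and the padding trick to reduce the multiple-occurrence case to the single-occurrence case, is sound. In particular, the arc contributions do sum to the full cyclic sum, a positive arc must have length at least $2$ (a length-$1$ arc contributes $0$), and inserting a repeated $\gr A$-vertex adds only a null term, so the reduction goes through.
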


We have (see \cite[Remark~2.10]{BBBRW}) the ordering 
\begin{equation}
\label{e:220510d}
\scal{\cdot}{\cdot} \leq F_{A,2}\leq F_{A,3}\leq \cdots \leq F_{A,n}\to F_{A,\infty}.
\end{equation}
Moreover, if $f$ is as in \cref{e:f}, then 
\cite[Theorem~3.5]{BBBRW} yields for every $(x,x^*)\in X\times X$ 
\begin{equation}
\label{e:220510e}
F_{\partial f,\infty}(x,x^*) = f(x)+f^*(x^*). 
\end{equation}
Computing $F_{A,n}$ is nontrivial; 
for some concrete examples, see 
\cite[Section~4]{BBBRW} and also \cite{BBW07}. 

We shall need the following identity.

\begin{lemma}
Let $(x,x^*)\in X$, and 
let $(a_1,a_1^*),\ldots,(a_{n-1},a_{n-1}^*)$ be in $X\times X$.
Then
\begin{subequations}
\label{e:ncyclem}
\begin{align}
\scal{x-a_{n-1}}{a_{n-1}^*} + \scal{a_1-x}{x^*}+\sum_{k=1}^{n-2}
\scal{a_{k+1}-a_k}{a_k^*}\\ = 
\scal{a_1-x}{x^*-a_1^*} + \sum_{k=2}^{n-1}\scal{a_k-x}{a_{k-1}^*-a_k^*}. 
\end{align}
\end{subequations}
\end{lemma}
\begin{proof}
We prove this by induction on $n\geq 2$. 
If $n=2$, then the left side of \cref{e:ncyclem} is 
\begin{equation}
\scal{x-a_{1}}{a_{1}^*} + \scal{a_1-x}{x^*}
= \scal{a_1-x}{x^*-a_1^*},
\end{equation}
which is also equal to the right side of \cref{e:ncyclem}. 

Now assume the result is true for some $n\geq 2$. 
We will show the result is also true for $n+1$. 
Indeed, using the inductive hypothesis in \cref{e:220510a}, we have 
\begin{subequations}
\begin{align}
&\hspace{-1cm}\scal{x-a_{n}}{a_{n}^*} + \scal{a_1-x}{x^*}+\sum_{k=1}^{n-1}
\scal{a_{k+1}-a_k}{a_k^*}\\ 
&= 
\scal{x-a_{n}}{a_{n}^*} -\scal{x-a_{n-1}}{a_{n-1}^*} + 
\scal{a_n-a_{n-1}}{a_{n-1}^*} \\
&\qquad + \scal{x-a_{n-1}}{a_{n-1}^*}+ \scal{a_1-x}{x^*}+\sum_{k=1}^{n-2} \scal{a_{k+1}-a_k}{a_k^*}\\ 
&=\scal{x-a_{n}}{a_{n}^*} -\scal{x-a_{n-1}}{a_{n-1}^*}
+\scal{a_n-a_{n-1}}{a_{n-1}^*}
\label{e:220510a}\\
&\qquad 
+ \scal{a_1-x}{x^*-a_1^*} + \sum_{k=2}^{n-1}\scal{a_k-x}{a_{k-1}^*-a_k^*} \\
&=\scal{x-a_{n}}{a_{n}^*} -\scal{x-a_{n-1}}{a_{n-1}^*}
+\scal{a_n-a_{n-1}}{a_{n-1}^*}-\scal{a_{n}-x}{a_{n-1}^*-a_n^*}\\
&\qquad 
+ \scal{a_1-x}{x^*-a_1^*} + \sum_{k=2}^{n}\scal{a_k-x}{a_{k-1}^*-a_k^*}
\\
&=\scal{a_1-x}{x^*-a_1^*} + \sum_{k=2}^{n}\scal{a_k-x}{a_{k-1}^*-a_k^*}
\end{align}
\end{subequations}
and we are done. 
\end{proof}

\begin{theorem}
Let $(x,x^*)\in X\times X$. If 
$(a_1,a_1^*),\ldots,(a_{n-1},a_{n-1}^*)$ belong to $\gr A$, then 
\begin{align}
\label{e:220510b}
F_{A,n}(x,x^*)-\scal{x}{x^*}
&\geq 
\scal{a_1-x}{x^*-a_1^*} + \sum_{k=2}^{n-1}\scal{a_k-x}{a_{k-1}^*-a_k^*}.
\end{align}
Moreover, let $\gamma_1>0,\ldots,\gamma_{n-1}>0$
and set 
\begin{subequations}
\begin{align}
a_1 &:= J_{\gamma_1A}(x+\gamma_1x^*), 
&a_1^* &:= \frac{x+\gamma_1x^*-a_1}{\gamma_1}\\
a_2 &:= J_{\gamma_2A}(x+\gamma_2a_1^*),
&a_2^* &:= \frac{x+\gamma_2a_1^*-a_2}{\gamma_2}\\
&\;\;\;\vdots & &\;\;\;\vdots&\\
a_{n-1} &:= J_{\gamma_{n-1}A}(x+\gamma_{n-1}a_{n-2}^*),
&a_{n-1}^* &:= \frac{x+\gamma_{n-1}a_{n-2}^*-a_{n-1}}{\gamma_{n-1}}.
\end{align}
\end{subequations}
Then 
\begin{align}
\label{e:220510c}
F_{A,n}(x,x^*)-\scal{x}{x^*}
&\geq 
\frac{\|x-J_{\gamma_1A}(x+\gamma_1x^*)\|^2}{\gamma_1}+ \sum_{k=2}^{n-1}
\frac{\|x-J_{\gamma_kA}(x+\gamma_ka_{k-1}^*)\|^2}{\gamma_k}.
\end{align}
\end{theorem}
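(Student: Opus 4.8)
The plan is to establish \cref{e:220510b} directly from the definition of the Fitzpatrick function of order $n$ together with the identity of the preceding Lemma, and then to obtain \cref{e:220510c} by specializing the points $(a_k,a_k^*)$ through the Minty parametrization of \cref{l:Minty}.

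For \cref{e:220510b}: by definition, $F_{A,n}(x,x^*)$ is the supremum of the expression in \cref{e:FAn} over all tuples $(a_1,a_1^*),\dots,(a_{n-1},a_{n-1}^*)$ in $\gr A$. Hence, for the given fixed tuple in $\gr A$, $F_{A,n}(x,x^*)$ dominates that expression, and subtracting $\scal{x}{x^*}$ from both sides gives
\[
F_{A,n}(x,x^*)-\scal{x}{x^*}\ \geq\ \scal{x-a_{n-1}}{a_{n-1}^*}+\scal{a_1-x}{x^*}+\sum_{k=1}^{n-2}\scal{a_{k+1}-a_k}{a_k^*}.
\]
Now apply the identity \cref{e:ncyclem} of the preceding Lemma to rewrite the right-hand side as $\scal{a_1-x}{x^*-a_1^*}+\sum_{k=2}^{n-1}\scal{a_k-x}{a_{k-1}^*-a_k^*}$, which is exactly \cref{e:220510b}.

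For \cref{e:220510c}: I adopt the convention $a_0^*:=x^*$, so that for each $k\in\{1,\dots,n-1\}$ the pair $(a_k,a_k^*)$ is precisely the Minty parametrization of \cref{l:Minty} applied to the point $(x,a_{k-1}^*)$ with parameter $\gamma_k$. Thus \cref{M2} yields $(a_k,a_k^*)\in\gr A$ for every such $k$, so the tuple is admissible and \cref{e:220510b} applies to it. Moreover, \cref{e:key1} with $x^*$ replaced by $a_{k-1}^*$ and $\gamma$ replaced by $\gamma_k$ gives
\[
\scal{a_k-x}{a_{k-1}^*-a_k^*}\ =\ \frac{\|x-J_{\gamma_kA}(x+\gamma_ka_{k-1}^*)\|^2}{\gamma_k}
\]
for each $k$; in particular the $k=1$ term equals $\|x-J_{\gamma_1A}(x+\gamma_1x^*)\|^2/\gamma_1$. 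Substituting these $n-1$ identities into \cref{e:220510b} produces \cref{e:220510c}.

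The technical core is already carried out in the identity Lemma, whose inductive telescoping proof does the real work; given that, both displays reduce to bookkeeping. The only point requiring genuine care is the index alignment: one must observe that the resolvent defining $a_k$ is fed the point $x+\gamma_k a_{k-1}^*$, so its ``dual'' coordinate is $a_{k-1}^*$ (not $a_k^*$ or $x^*$), which is exactly what makes \cref{e:key1} produce the cross term $\scal{a_k-x}{a_{k-1}^*-a_k^*}$ occurring in \cref{e:220510b}. With the convention $a_0^*=x^*$ this matching is uniform in $k$, and there is no obstacle beyond it.
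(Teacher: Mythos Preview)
Your proof is correct and follows essentially the same route as the paper: bound $F_{A,n}(x,x^*)-\scal{x}{x^*}$ from below via \cref{e:FAn}, rewrite using the identity \cref{e:ncyclem}, and then specialize with \cref{l:Minty} (specifically \cref{M2} and \cref{e:key1}) to obtain \cref{e:220510c}. Your explicit convention $a_0^*:=x^*$ and the remark on index alignment make the bookkeeping slightly more transparent than in the paper, but the argument is the same.
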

\begin{proof}
Indeed, using \cref{e:FAn} and \cref{e:ncyclem}, we have 
\begin{subequations}
\begin{align}
F_{A,n}(x,x^*)-\scal{x}{x^*}
&\geq 
 \scal{x-a_{n-1}}{a_{n-1}^*} + \scal{a_1-x}{x^*} + 
\sum_{k=1}^{n-2}\scal{a_{k+1}-a_k}{a_k^*}\\
&=\scal{a_1-x}{x^*-a_1^*} + \sum_{k=2}^{n-1}\scal{a_k-x}{a_{k-1}^*-a_k^*}
\end{align}
\end{subequations}
which is \cref{e:220510b}.

We now turn towards the ``Moreover'' part.
By \cref{l:Minty}, 
the pairs $(a_1,a_1^*),\ldots,(a_{n-1},a_{n-1}^*)$ lie in $\gr A$. 
The inequality \cref{e:220510c} follows by combining 
\cref{e:220510b} with \cref{e:key1}.
\end{proof}

\begin{corollary} {\bf (a series lower bound)} 
\label{c:superC}
Recall that $f$ satisfies \cref{e:f}, and let $x,x^*$ be in $X$. 
Let $(\gamma_n)_{n\geq 1}$ be a sequence in $\RPP$.
Generate $(a_n)_{n\geq 1}$
and $(a_n^*)_{n\geq 0}$ via 
\begin{equation}
\label{e:superseq}
a_0^* := x^*,\qquad 
(\forall n\geq 1)\quad 
a_n := \prox_{\gamma_n f}(x+\gamma_na_{n-1}^*)
\;\;\text{and}\;\;
a_n^* := \frac{x+\gamma_n a_{n-1}^*-a_n }{\gamma_n}. 
\end{equation}
Then we obtain the lower bound
%\begin{subequations}
%\begin{align}
\begin{equation}
\label{e:superC}
G_f(x,x^*)=f(x)+f^*(x^*)-\scal{x}{x^*}
\geq 
\sum_{k=1}^\infty 
\frac{\|x-\prox_{\gamma_kf}(x+\gamma_ka_{k-1}^*)\|^2}{\gamma_k}.
%\\
% &\geq 
% \frac{\|x-\prox_{\gamma_1f}(x+\gamma_1x^*)\|^2}{\gamma_1}. 
\end{equation}
%\end{align}
%\end{subequations}
\end{corollary}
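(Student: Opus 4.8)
The plan is to apply inequality~\cref{e:220510c} with $A:=\partial f$ and then let $n\to\infty$. Since $f$ satisfies \cref{e:f}, the operator $A=\partial f$ is maximally monotone, so $J_{\gamma A}=\prox_{\gamma f}$ is single-valued and everywhere defined for each $\gamma>0$. Consequently, for each fixed $n\geq 2$, the first $n-1$ terms of the sequences $(a_k)_{k\geq 1}$ and $(a_k^*)_{k\geq 0}$ built in \cref{e:superseq} coincide with the finite families $a_1,\ldots,a_{n-1}$ and $a_1^*,\ldots,a_{n-1}^*$ appearing in the theorem containing \cref{e:220510c}; here the convention $a_0^*=x^*$ is precisely what makes the first term $\prox_{\gamma_1 f}(x+\gamma_1 a_0^*)$ agree with $J_{\gamma_1 A}(x+\gamma_1 x^*)$. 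By \cref{l:Minty}, applied successively, each pair $(a_k,a_k^*)$ lies in $\gr A$, so the hypotheses needed for \cref{e:220510c} are met.

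First I would invoke \cref{e:220510c} to obtain, for every $n\geq 2$,
\[
F_{\partial f,n}(x,x^*)-\scal{x}{x^*}\;\geq\;\sum_{k=1}^{n-1}\frac{\|x-\prox_{\gamma_k f}(x+\gamma_k a_{k-1}^*)\|^2}{\gamma_k},
\]
where the $k=1$ summand is $\|x-\prox_{\gamma_1 f}(x+\gamma_1 x^*)\|^2/\gamma_1$. Next I would bound the left-hand side from above uniformly in $n$: by the ordering \cref{e:220510d} we have $F_{\partial f,n}\leq F_{\partial f,\infty}$, and by \cref{e:220510e} we have $F_{\partial f,\infty}(x,x^*)=f(x)+f^*(x^*)$. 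Hence, for every $n\geq 2$,
\[
\sum_{k=1}^{n-1}\frac{\|x-\prox_{\gamma_k f}(x+\gamma_k a_{k-1}^*)\|^2}{\gamma_k}\;\leq\;f(x)+f^*(x^*)-\scal{x}{x^*}\;=\;G_f(x,x^*).
\]

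Finally, every summand is a nonnegative real number, so the partial sums of the series in \cref{e:superC} form a nondecreasing sequence bounded above by $G_f(x,x^*)$; taking the supremum over $n$ (equivalently, letting $n\to\infty$) yields \cref{e:superC}, the inequality being trivial when $G_f(x,x^*)=+\infty$. I do not foresee a genuine obstacle here: the real work has already been done in \cref{l:Minty}, in the theorem preceding the corollary, and in the results on Fitzpatrick functions of higher order drawn from \cite{BBBRW}. The only points demanding a moment's care are lining up the indices between \cref{e:superseq} and the theorem's finite families, and observing that the passage $n\to\infty$ is legitimate precisely because the majorant $G_f(x,x^*)$ is independent of $n$.
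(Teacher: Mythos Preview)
Your proposal is correct and follows essentially the same route as the paper: you invoke \cref{e:220510c} with $A=\partial f$, bound $F_{\partial f,n}$ by $F_{\partial f,\infty}=f\oplus f^*$ via \cref{e:220510d} and \cref{e:220510e}, and then pass to the limit in $n$. The paper's proof is simply the one-line statement that the result follows by combining \cref{e:220510d}, \cref{e:220510e}, and \cref{e:220510c}; you have spelled out the index-matching and the limiting argument more explicitly, but the substance is identical.
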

\begin{proof}
Because $\partial f$ is maximally cyclically monotone, 
the result thus follows by combining 
\cref{e:220510d}, \cref{e:220510e}, and \cref{e:220510c}. 
\end{proof}

\begin{remark}
Consider \cref{c:superC} and its notation. 
Suppose that $(x,x^*)\in\dom f\times \dom f^*$.
Then \cref{e:superC} yields 
\begin{equation}
\sum_{k=1}^\infty 
\frac{\|x-\prox_{\gamma_kf}(x+\gamma_ka_{k-1}^*)\|^2}{\gamma_k} < \pinf;
\quad\text{hence,}\quad
\frac{x-\prox_{\gamma_kf}(x+\gamma_ka_{k-1}^*)}{\sqrt{\gamma_k}} \to 0. 
\end{equation}
If we truncate the infinite series in \cref{e:superC} after the first term and 
set $\gamma = \gamma_1$,
then we obtain Carlier's bound  (see \cref{t:Carlier})
\begin{equation}
f(x)+f^*(x^*)-\scal{x}{x^*} 
\geq \frac{\|x-\prox_{\gamma f}(x+\gamma x^*)\|^2}{\gamma}. 
\end{equation}
\end{remark}

\begin{example}
Suppose that $f=\iota_U$, where $U$ is a closed linear subspace of $X$.
Then $\prox_{\gamma_nf} = P_U$ and 
for every $n\geq 1$, we have 
\begin{equation}
a_n = P_U(x+\gamma_n a_{n-1}^*) = P_Ux + \gamma_n P_Ua_{n-1}^* 
\end{equation}
and 
\begin{subequations}
\begin{align}
a_n^* 
&= \frac{x+\gamma_n a_{n-1}^*-a_n }{\gamma_n} 
= \frac{x+\gamma_na_{n-1}^*-P_Ux-\gamma_nP_Ua_{n-1}^*}{\gamma_n}\\
&= \frac{P_{U^\perp}x}{\gamma_n}+P_{U^\perp}a_{n-1}^*\in U^\perp\\
&\;\; \vdots\\
&=\bigg(\frac{1}{\gamma_1}+\frac{1}{\gamma_2}+\cdots+\frac{1}{\gamma_n}\bigg)P_{U^\perp}x + P_{U^\perp}x^*\in U^\perp;
\end{align}
\end{subequations}
thus, 
$a_1 = P_Ux+\gamma_1P_Ux^*$
and
$a_2 = \cdots = a_n = P_Ux$.
It follows that 
\begin{equation}
\frac{\|x-\prox_{\gamma_1f}(x+\gamma_1a_{0}^*)\|^2}{\gamma_1}
= \frac{\|x-P_{U}(x+\gamma_1x^*)\|^2}{\gamma_1}
= \frac{1}{\gamma_1}\|P_{U^\perp}x\|^2 + \gamma_1\|P_{U}x^*\|^2
\end{equation}
and that for every $k\geq 2$
\begin{subequations}
\begin{align}
\frac{\|x-\prox_{\gamma_kf}(x+\gamma_ka_{k-1}^*)\|^2}{\gamma_k}
&=\frac{\|x-P_{U}(x+\gamma_ka_{k-1}^*)\|^2}{\gamma_k}
= \frac{1}{\gamma_k}\|P_{U^\perp}x\|^2+\gamma_k\|P_Ua_{k-1}^*\|^2\\
&= \frac{1}{\gamma_k}\|P_{U^\perp}x\|^2. 
\end{align}
\end{subequations}
Therefore, the lower bound in \cref{e:superC} turns into 
\begin{equation}
\gamma_1\|P_Ux^*\|^2 + \bigg(\sum_{k=1}^\infty \frac{1}{\gamma_k}\bigg)\|P_{U^\perp}x\|^2
\end{equation}
which is strictly larger than Carlier's bound whenever $x\notin U$. 
\end{example}

\begin{example}
Suppose that $f=\tfrac{1}{2}\|\cdot\|^2 = f^*$, let 
$\gamma>0$ and set $\gamma_n=\gamma$ for all $n\geq 1$. 
Then 
$\prox_{\gamma_kf}= (\Id+\gamma_k\Id)^{-1}=(1+\gamma)^{-1}\Id$. 
Then for every $n\geq 1$, we have
\begin{equation}
a_n =  \frac{x+\gamma a_{n-1}^*}{1+\gamma} 
\end{equation}
and 
\begin{subequations}
\begin{align}
a_n^* 
&= \frac{x+\gamma a_{n-1}^*-a_n }{\gamma} 
= \frac{x+\gamma a_{n-1}^*-(1+\gamma)^{-1}(x+\gamma a_{n-1}^*)}{\gamma}\\
&= \frac{1}{1+\gamma}x + \frac{\gamma}{1+\gamma}a_{n-1}^*\\
&\;\; \vdots\\
&=\bigg(1-\frac{\gamma^n}{(1+\gamma)^n}\bigg)x + \frac{\gamma^n}{(1+\gamma)^n}x^*. 
\end{align}
\end{subequations}
It follows that 
\begin{equation}
\frac{\|x-\prox_{\gamma_1f}(x+\gamma_1a_{0}^*)\|^2}{\gamma_1}
= \frac{\|x-(1+\gamma)^{-1}(x+\gamma x^*)\|^2}{\gamma}
= \frac{\gamma}{(1+\gamma)^2}\|x-x^*\|^2 
\end{equation}
and that for every $k\geq 2$
\begin{subequations}
\begin{align}
\frac{\|x-\prox_{\gamma_kf}(x+\gamma_ka_{k-1}^*)\|^2}{\gamma_k}
&=\frac{\gamma}{(1+\gamma)^2}\|x-a_{k-1}^*\|^2\\
&=\frac{\gamma}{(1+\gamma)^2}\frac{\gamma^{2(k-1)}}{(1+\gamma)^{2(k-1)}}\|x-x^*\|^2\\
&=\frac{\gamma^{2k-1}}{(1+\gamma)^{2k}}\|x-x^*\|^2. 
\end{align}
\end{subequations}
That is, 
\begin{equation}
(\forall k\geq 1)\quad
\frac{\|x-\prox_{\gamma_kf}(x+\gamma_ka_{k-1}^*)\|^2}{\gamma_k} = \frac{\gamma^{2k-1}}{(1+\gamma)^{2k}}\|x-x^*\|^2. 
\end{equation}
Therefore, the lower bound in \cref{e:superC} turns into 
\begin{equation}
\bigg(\sum_{k=1}^\infty \frac{\gamma^{2k-1}}{(1+\gamma)^{2k}} \bigg)
\|x-x^*\|^2 
= \frac{\gamma}{1+2\gamma}\|x-x^*\|^2
\end{equation}
which is strictly greater than Carlier's bound $\gamma(1+\gamma)^{-2}\|x-x^*\|^2$ 
whenever $x\neq x^*$. 
\end{example}

\section*{Acknowledgments}
We thank Guillaume Carlier for sending us his beautiful preprint \cite{Carlier}. 
HHB and XW were supported by NSERC Discovery Grants.

\end{document}